\newtheorem{Thm}{Theorem}[subsection]
\newtheorem{Def/Thm}[Thm]{Definition/Theorem}
\newtheorem{Cor}[Thm]{Corollary}
\newtheorem{Lemma}[Thm]{Lemma}
\theoremstyle{remark}
\numberwithin{equation}{subsection}
\newcommand{\ot }{\otimes}
\newcommand{\ra }{\rightarrow}
\newcommand{\Spec}{{\mathrm{Spec}}}
\newcommand{\Sch}{{\mathrm{Sch}}}
\newcommand{\fX}{{\mathfrak{X}}}
\newcommand{\fM}{{\mathfrak{M}}}
\newcommand{\fMB}{{\mathfrak{MB}}}
\newcommand{\sm}{{\mathrm{sm}}}
\newcommand{\sing}{{\mathrm{sing}}}
\newcommand{\id}{{\mathrm{id}}}
\newcommand{\LMmu}{\overline{M}_{g,n,\mu }^{\mathrm{log}}(\fX^ +/\fX ,\beta )}
\newcommand{\co}{{\mathcal{O}}}
\newcommand{\cF}{{\mathcal{F}}}
\newcommand{\cL}{{\mathcal{L}}}
\newcommand{\cB}{{\mathcal{B}}}
\newcommand{\cM}{{\mathcal{M}}}
\newcommand{\NN}{{\mathbb N}}
\newcommand{\PP }{{\mathbb P}}
\newcommand{\CC }{{\mathbb C}}
\newcommand{\ZZ }{{\mathbb Z}}
\begin{document}

\title{Logarithmic Stable Maps}

\author{Bumsig Kim}
\address{School of Mathematics, Korea Institute for Advanced Study,
Hoegiro 87, Dongdaemun-gu, Seoul, 130-722, Korea}
\email{bumsig@kias.re.kr}

\begin{abstract}
We introduce the notion of a logarithmic stable map from a
minimal log prestable curve to a log twisted semi-stable
variety of form $xy=0$. We study the compactification of the moduli spaces of such maps
and provide a perfect obstruction theory, applicable to the moduli spaces of (un)ramified stable
maps and  stable relative maps. As an application, we obtain a modular
desingularization of the main component of Kontsevich's moduli
space of elliptic stable maps to a projective space.
\end{abstract}


\maketitle


\section{Introduction}
\subsection{}
In papers \cite{HM, Li1}, the admissibility, or equivalently, the
predeformability of J. Li was introduced. It is  a condition on
maps from curves to semi-stable varieties which are \'etale
locally of form $xy=0$. The condition is natural.
It is a necessary and sufficient condition
 to deform, \'etale locally at  the domain,
 such a map to a map from a smooth domain curve to a smooth target.
 It is, however, not a condition friendly to moduli problems
(\cite{Li1, Li2, GV}). If suitable log structures are given both
on the domain curve and the target space, the admissibility
amounts to the requirement that the map is a log morphism which is
simple at the inverse image of the singular locus. For the
separatedness of moduli spaces of such maps, it will be imposed
that the log structures on the domain prestable curves are minimal
(see \ref{MinimalCurve}); the log structures on the targets are
extended log twisted ones (see \ref{ExtendedLogTwistedSpace}); and
the automorphism groups of the log morphisms are finite. Those
maps, satisfying the above conditions, will be called log stable
maps. Once suitable log structures are introduced, due to the
construction of log cotangent complexes by Olsson in
\cite{Olsson_Cotangent} among other things, it is straightforward
to show that the moduli spaces of log stable maps carry the
relative perfect obstruction theory  identical to that of usual
stable maps case if the tangent sheaves of the targets are
replaced by the log tangent sheaves (see \ref{ob1}). In log sense,
both sources and targets of the maps we consider are smooth, which
is one of reasons why log geometry works well in the study of
moduli spaces of such stable maps. The advantage of the usage of
log geometry  is manifest in \ref{ob2}: The log admissibility
suffices to deform, \'etale locally at  the target,
 such a map to a map from a smooth domain curve to a smooth target.

We explain the results of this paper, more precisely. Let
$\mathbf{k}$ be a fixed algebraically closed field of
characteristic zero. Let $\cB$ be an algebraic stack whose objects
form a collection of $(W/S, W\ra X)$, where $S, X$ are schemes
over a base algebraic $\mathbf{k}$ scheme $\Lambda$; $W$ is an
algebraic space over $S$;
 $W/S$ is a proper flat family
 of semi-stable varieties of form $xy=0$;
 and $W\ra X$ is a
 map. Assume that $\cB$ is smooth over $\Lambda$
 and has a universal family $\mathcal{U}$ over $\cB$. Main examples we have in
 mind are these: the stack $\fX$ of FM spaces of a smooth projective variety $X$ (\cite{KKO});
  the stack of expanded degeneration spaces
 of a smooth projective variety with respect to a smooth divisor (\cite{Li1, KS});
 and the stack of expanded degeneration spaces of a semi-stable, projective, degeneration space of form $xy=0$
 (\cite{Li1}).

\medskip

\noindent{\bf Main Theorem A.} {\em The moduli stack  $\overline{M}_{g,n}^{\mathrm{log}}(\mathcal{U/B})$
 of $(g,n)$ log stable  maps to $W/S \in \cB$
  is an algebraic stack with a perfect obstruction theory over $\Lambda$.
  Furthermore if the moduli stack
   $\overline{M}_{g,n}(\mathcal{U/B})$
of its underlying stable maps is a proper DM stack over $\Lambda$, so is
$\overline{M}_{g,n}^{\mathrm{log}}(\mathcal{U/B})$ over $\Lambda$.}

\medskip

We refer to subsection  \ref{StackLogStableMaps} for the precise
meaning of the statement of Main Theorem A. The theorem yields an
explicit description of a perfect obstruction theory for the log versions of the stack
of stable (un)ramified maps (\cite{KKO}) and the stack of stable
relative maps of J. Li (\cite{Li2}). In fact, the moduli stack $\overline{M}_{g,n}^{\mathrm{log}}(\mathcal{U/B})$ is
equipped with a log structure, and hence it is a log algebraic
stack.
When $\cB$
is the stack $\fX$ of FM spaces of a projective nonsingular
variety $X$ over $\bf k$ (\cite{KKO}), after (un)ramified condition and the
strong stability being imposed, the stack
$\overline{M}_{g,n}^{\mathrm{log}}(\fX ^+/\fX)$ is a proper DM log
stack over $\mathbf{k}$. Here $\fX ^+$ is the universal family of $\fX$.

When the genus is 1, we will consider a  variant of stable (un)ramified maps,   namely,
 elliptic log stable maps to chain type FM spaces of $X$. The key condition on these,
possibly non-finite, maps is that either the genus 1 components or the loops of the rational components
are nonconstantly mapped under the maps. See \ref{chain1} for the precise definition.

\medskip

\noindent{\bf Main Theorem B.} {\em   The moduli stack
$\overline{M}_{1}^{\mathrm{log,\, ch}}(\fX ^+/\fX )$ of elliptic log stable maps to chain type FM spaces of $X$ is a
proper DM-stack over $\mathbf{k}$, carrying a perfect obstruction theory.
When $X$ is a projective space $\PP ^r_{\mathbf{k}}$,
the stack is smooth over $\mathbf{k}$.
 }
\medskip

Here the smoothness means the usual smoothness as a DM stack.
Hence it provides a moduli-theoretic desingularization of the main
component of stable maps. It would be interesting to find an
explicit relationship with Vakil and Zinger's desingularization in
\cite{VZ}. The space $\overline{M}_{1}^{\mathrm{log,\, ch}}(\fX
^+/\fX)$
 can be perhaps used to algebro-geometrically
establish quantum Lefschetz hyperplane section principle for
elliptic case and to prove elliptic Mirror Conjecture for
Calabi-Yau hypersurfaces
 in a projective space.
 The hyperplane section principle for reduced genus 1 Gromov-Witten
 invariants and
the comparisons between reduced and standard genus 1 invariants
are accomplished in \cite{LZ, Z1, Z2, Z3}.

\subsection{Remarks}
The minimality condition on log prestable curves, defined here, is
a condition stronger than the minimality introduced by Wewers in
\cite{We}. The idea that log structures should be useful in
(relative) Gromov-Witten theory has been around many years
(\cite{Li2, Si}). After finishing the paper, the author learned of
Siebert's long unfinished project  \cite{Si} on log GW invariants
that includes a construction of the virtual fundamental class
without using log cotangent complexes. After posting the paper,
the author was informed that there is another approach by
Abramovich and Fantechi using twisted stacky structures along the
singular loci \cite{AF}.

\subsection{Conventions}

 Throughout the paper, all schemes are locally
 noetherian schemes over $\Lambda$ unless otherwise  stated.
 The readers who are familiar with log geometry can skip section \ref{LogGeometry}, keeping
 in mind the following conventions.
 Log structures considered are always fine.
 Log schemes will be denoted by $(X,M)$ (or $X^\dagger$) when
 $X$ is an underlying scheme and $M$ is a sheaf of monoid
 on the \'etale site $X_{\text{\'et}}$ of $X$, equipped with a homomorphism
 $\alpha : M\ra \mathcal{O}_X$ such that $\alpha ^{-1}(\co _X^\times ) \cong \co _X^\times$
 under $\alpha$.
  Sometimes $X$ alone will denote the log scheme. For a monoid $P$,
 $P_X$ or simply $P$ will mean a constant sheaf of the monoid on $X_{\text{\' et}}$.
 The relative log differential sheaf of $X^\dagger\ra Y^\dagger$
 will be written by $\Omega ^\dagger _{X/Y}$.
 In order to avoid confusion, we often use separated notations $f$ and $\underline{f}$ for
 a log morphism and its underlying morphism, respectively. The symbol $e_i$ is
 the $i$-th element of the standard basis of $\NN ^m$.
 For maps $X\ra Y$ and $Z\ra Y$, the fiber product $X\times _Z Y$
 will be written by $X_{|_Z}$; and by $X_z$ if $Z$ is a point $z$.
We will consider log structures also on algebraic spaces, extending the definition obviously.

\subsection{Acknowledgements} The author thanks
A. Bertram, I. Ciocan-Fontanine, A. Givental, T. Graber, A.
Kresch, J. Li, Y.-G. Oh, R. Pandharipande, B. Siebert, R. Vakil,
and A. Zinger for helpful discussions; M. Olsson for delightful
suggestions; C. Cadman,  Y.-P. Lee, F. Sato and Referee for useful
comments on the paper; and Department of Mathematics, UC-Berkeley
for providing excellent environments while the paper is written
there. This work is partially supported by KRF-2007-341-C00006.

\section{Basics on Logarithmic Geometry}\label{LogGeometry}

Closely following \cite{K.Kato, Ogus},
we recall the basic terminology on log geometry which we use in this paper.

\subsection{Monoids}
A {\em monoid} is a set $P$ with an associative commutative binary
operation $+$ with a unity. We assume that a homomorphism between
monoids preserves the unities. An equivalence relation on a monoid
$P$ is called a congruence relation if $a\sim b$ iff $a+p\sim b+p$
for all $p\in P$. Note that when $\sim$ is a {\em congruence
relation} there is a unique monoid structure on $P/\sim$ such that
the projection $P\ra P/\sim$  is a monoid homomorphism. A monoid
$P$ is called {\em integral} if $p+p_1=p+p_2$ implies that
$p_1=p_2$. A homomorphism $h: P\ra Q$ between integral monoids is
called an {\em integral homomorphism} if for any $h(p_1)+q_1 =
h(p_2) +q_2$, there are $p_1', p_2' \in P,  q \in Q$ such that
$q_1=h(p_1')+q$, $q_2=h(p_2')+q$, and $p_1+p_1'=p_2+p_2'$. The {\em
cokernel} of a homomorphism $h: P\ra Q$ is defined to be the
induced monoid on $Q/P$ where the coset is given as: $q \sim q' $
iff $q+h(p) = q'+h(p')$ for some $p, p'\in P$. The {\em group}
$P^{gp}$ associated to a monoid $P$ is defined to be the induced
monoid on $P\times P /\sim$, where $(p,q) \sim (p',q')$ iff $p+q'+
r= p'+q+ r$ for some $r\in P$. A monoid is called {\em sharp} if
there is no nonzero unit (here units are, by definition, invertible elements).
A nonzero element $u$ in a sharp monoid
is called {\em irreducible} if $u=p+q$ implies that $p$ or $q$ is
zero.

\subsection{Log structures}

A {\em pre-logarithmic} structure on a scheme $X$ is a pair
$(M,\alpha )$ where $M$ is a sheaf of unital commutative monoids
on the \'etale site $X_{\text{\'et}}$ of $X$ and $\alpha$ is a
monoidal sheaf homomorphism from $M$ to $\mathcal{O}_{X}$.  Here $\mathcal{O}_X$ is taken to be a
sheaf of monoid with respect to the multiplications.
When
$\alpha ^{-1}(\mathcal{O}_X ^{\times})\cong
\mathcal{O}_X^{\times}$ via $\alpha$, it is called {\em
logarithmic} structure.
For a given
pre-log structure $(M,\alpha )$, the {\em associated} log
structure $M^a$ is defined to be the amalgamated sum (i.e., the
pushout)
\[  M\oplus _{\alpha ^{-1}(\mathcal{O}_X)}\mathcal{O}_X^\times := M\oplus \co _X^\times /\sim ,\]
where $(m,u)\sim (m',u')$ if there are $a, a'\in \alpha
^{-1}(\mathcal{O}_X^\times) $ for which $a + m = a' + m'$ and $
\alpha (a')u = \alpha (a) u'$.

If $f:X\ra Y$ is a map between schemes and $N$ is a log structure
on $Y$, then define the {\em pullback} $f^*N$ to be the associated
log structure of the prelog structure of $f^{-1}N\ra
f^{-1}\mathcal{O}_Y \ra \mathcal{O}_X$.

A {\em log morphism} $f: (X,M)\ra (Y,N)$ between log schemes consists of a scheme morphism
$f:X\ra Y$ and a monoidal sheaf homomorphism $f^b : f^*N \ra M$ making the diagram
\[\xymatrix{ f^*N  \ar[r] \ar[rd] & M \ar[d]\\
                       & \co _X }\] commute. Later, for simplicity, we often say that
                   $f^b: f^*N\ra M$ is a homomorphism
                   {\em over log structure maps} if the diagram commutes.

A {\em chart} $P_X$ of a log structure $M$ on
$X$ is $\theta : P_X\ra M$, where $P_X$ is the constant sheaf
of a finitely generated integral monoid $P$ making its
associated log structure  isomorphic to the log structure $M$ under $\theta$.
When a chart exits \'etale locally
everywhere on $X$, we say that the log structure is {\em  fine}.
 From now on, we consider only fine log structures.
 We will denote the separable closure
 of $p\in X$ by $\bar{p}$.  The quotient sheaf
 $M/\alpha ^{-1}(\mathcal{O}_X^\times )$, denoted  by $\overline{M}$, is
  called the {\em characteristic}.
 It is useful to note that $\overline{f^*N}=f^{-1}\overline{N}$.
A fine log structure is called {\em locally free} if for every point $p\in X$, $\overline{M}_{\bar{p}}$
 is finitely generated and free,
i.e. $\overline{M}_{\bar{p}}\cong \NN ^r$ for some integer $r$
which possibly depends on $p$. If $M$ is locally free, then for every
$p\in X$, there is a chart $\theta: \NN ^r _X \ra M$ by which $\NN
^r \ra \overline{M}_{\bar{p}}$ is an isomorphism and $M_{\bar p}
\cong \overline{M}_{\bar{p}} \oplus \mathcal{O}_{\bar{p}}^\times$
(Lemma \ref{chart}).

Let $M$ and $M'$ be log structures on $X$ with
$\overline{M}_{\bar{p}}$ and $\overline{M}_{\bar{p}'}$ sharp,
where $p\in X$. A homomorphism $h$ from $M$ to $M'$ will be called
{\em simple} at $p\in X$ if: $\bar{h}: \overline{M}_{\bar{p}}\ra
\overline{M'}_{\bar{p}}$ is injective, and  for any irreducible
element $b \in \overline{M'}_{\bar{p}}$, there is an irreducible
element $a$ in $\overline{M}_{\bar{p}}$ such that $\bar{h}(a)=mb$
for some positive integer $m$, where $\bar{h}$ is the induced map
from $h$. When $M$ and $M'$ are locally free, it means that
$\overline{M}_{\bar{p}}\ra \overline{M'}_{\bar{p}}$ is form of a
diagonal map $d=(d_1,...,d_r) : \NN ^r \ra \NN ^r$, where the
standard basis $e_i$ maps to $d_ie_i$ and $d_i\ne 0$ for all $i$.

A {\em chart of a log morphism} $f$ is a triple $(P_X\ra M, Q_Y\ra
N, Q\ra P)$ of the chart $P$ for $M$, the chart $Q$ for $N$, and a
homomorphism $Q\ra P$ making the diagram
\[\xymatrix{  P _X \ar[r]  & M  \\
                     Q_X \ar[r]  \ar[u] &f^*N \ar[u] }\] commutative.
For any log morphism $f:(X,M)\ra (Y,N)$ and any \'etale local chart $Q\ra N$, there is
a chart  $(P_X\ra M, Q_Y\ra N, Q\ra P)$  of the map $f$ \'etale locally by Lemma 2.10 in \cite{K.Kato}.

 A log morphism $f: (X,M)\ra (Y, N)$ is called:

\begin{itemize}

\item {\em log smooth} if \'etale locally, there is a chart $(P_X\ra M, Q_Y\ra N, Q\ra P)$ of $f$
 such that:
 \begin{itemize}

 \item $\mathrm{Ker}(Q^{gp}\ra P^{gp})$ and the torsion part of $\mathrm{Coker}(Q^{gp}\ra P^{gp})$ are
  finite groups.

\item The induced map $X \ra Y\times _{\Spec (\ZZ [Q])}\Spec \ZZ[P]$ is smooth in the usual sense.

\end{itemize}

\item {\em integral} if for every $p\in X$, the induced map
$\overline{N}_{f(\bar{p})} \ra \overline{M}_{\bar{p}}$ is integral.

\item {\em vertical} if $M/f^*N$ is a sheaf of groups under the induced monoidal operation.

\item {\em strict} if $f^b: f^*N \ra M$ is an isomorphism.

\end{itemize}

All above notions are preserved under base changes.

The amalgamation of integral monoids

\[ \xymatrix{      &   P_1 \\
                      P_2 & Q \ar[u]_{\theta} \ar[l] }\]

is not longer integral in general. However, when
$\theta$ is integral, the amalgamation is always integral.
Hence in that case, the base change, in other words, the fiber
product of an integral log morphism $f_1$ is defined to be
\[ (X_1,M_1)\times _{(Y,N)} (X_2,M_2) := (X_1\times _Y X_2, (P_1\oplus _Q P_2 )^a )\]
where $P_1, P_2, Q$ are charts of the log morphisms $f_i:
(X_i,M_i)\ra (Y,N)$. See Proposition 4.1 in \cite{K.Kato} for
various equivalent definitions of integral morphisms. For
instance, we see that the underlying map of a smooth and integral
morphism is flat (Corollary 4.5 \cite{K.Kato}).

\section{Log Curves}

This section deals with the definition and some properties of
 the allowed domains of log stable maps. The domains
will be minimal log prestable curves defined in subsection \ref{MinimalCurve}.
We start with fixing a notion of log prestable curves basically following
\cite{Mo, We, F.Kato_LogSmoothCurve, Olsson_UnivLog}
for our purpose.

%
%

\subsection{1st definition}\label{logprecurve1} A log morphism $\pi: (C,M) \ra (S,N)$ is
called a {\em log prestable curve} over a fine log scheme $(S,N)$
if every geometric fiber of $\underline{\pi}$ is a prestable curve
and $\pi$ is a proper, log smooth, integral, and vertical morphism.

\medskip

According to \cite{F.Kato_LogSmoothCurve} and
\cite{Olsson_UnivLog}, this definition is equivalent to the
following second definition.

%
%

\subsection{2nd definition} \label{logprecurve2} A log morphism $\pi: (C, M)\ra (S, N)$
 is called a {\em log prestable curve} over $(S,N)$ if:
\begin{enumerate}

\item The underlying map $\underline{\pi} : C\ra S$ is a $S$-family of prestable curves.

\item
\begin{itemize} \item The restriction of $\pi ^* N \ra M$
to the $\underline{\pi}$-smooth locus $(C/S)^{\mathrm{sm}}$ is an isomorphism.

\item If $p$ is a singular point with respect to $\underline{\pi}$,
          then there are an \'etale neighborhood $U$ of $\bar{p}$,
          an affine \'etale neighborhood $\Spec A$ of
           $\pi (\bar{p})$ in $S$, and  a chart
           $((\NN ^2 \oplus _{\NN} Q)_C\ra M, Q_S\ra N, Q\ra \NN ^2 \oplus _{\NN} Q)$
           of $\pi$ such that
           the induced map $$U\ra \Spec( A \ot  _{\ZZ[Q]}\ZZ [\NN ^2 \oplus _{\NN} Q])$$
           is \'etale, where $\NN \ra \NN^2$ is
           the diagonal $\Delta$, sending $e_1\mapsto e_1+e_2$.

\end{itemize}

\item The log structure of $N$ is fine.

\end{enumerate}

\subsection{Remarks}

Here are some remarks on the definitions above.

\subsubsection{}

Since the homomorphism $\Delta$ and the monoid $Q$ are integral, the monoid $\NN ^2\oplus _{\NN} Q$
and the homomorphism $\NN ^2\oplus _{\NN} Q\ra Q$ are also integral by Proposition 4.1 in \cite{K.Kato}.

\subsubsection{}  Let  $Q\ra N$ be a chart at $s:=\pi (\bar{p})$, which induces
an isomorphism $Q\ra \overline{N}_{\bar{s}}$. Such a chart is called good at $s$ in the past literature.
Then at any node $\bar{p}$ of $C_{\bar{s}}$, \'etale locally $f$ has a
chart $((\NN ^2\oplus _{\NN} Q)_C \ra \co _C, Q_S \ra \co _S,  Q \ra\NN ^2\oplus _{\NN} Q)$ satisfying:
 the induced map $\mathcal{O}_S\ot _{\ZZ [Q]} \ZZ [ \NN ^2\oplus _{\NN} Q]\ra \mathcal{O}_C$ is \'etale, and
 the induced map $  \NN ^2\oplus _{\NN} Q \ra \overline{M}_{\bar{p}}$ is an isomorphism.
  We  can see this by working on the characteristics $Q=\overline{N}_{\bar{s}}$ and $P=\overline{M}_{\bar{p}}$,
  as following.
 By the second definition of log prestable curves and the statment 1 in Lemma \ref{chart} below,
 we know that $P$ is the amalgamation sum of
 \[ \xymatrix{ \NN \ar[rr]_{e_1\mapsto e_1+e_2} \ar[d] & & \NN ^2 \\
                       Q . & &                     }\]
                        Now we lift $P$ and $Q$ to make
 charts for $M$ and $N$, using 2 and 3 in Lemma \ref{chart}.

 \medskip

\begin{Lemma}\label{chart}  Let $(M,\alpha )$ be a fine log structure on a scheme $X$
 and  let $x \in X$.

1.  A homomorphism $\theta: P\ra M_{\bar{x}}$ induces a chart \'etale locally at $\bar{x}$ if and only if
     the induced homomorphism
     $P/(\alpha\circ\theta)^{-1}(\co _{\bar{x}}^\times) \ra \overline{M}_{\bar{x}}$
     is an isomorphism.

2. Suppose that $\theta : \overline{M}_{\bar{x}} \ra M_{\bar{x}}$ is a
    lift of $M_{\bar{x}}\ra \overline{M}_{\bar{x}}$,
    then $\theta$ provides a local chart of $M$ at ${\bar{x}}$.

3.  {\em (Proposition 2.1 \cite{Olsson_LogGeom})} Let $(X,M)$ be a
fine log scheme, and let $x\in X$ such that the characteristic of
the residue field of $x$ is zero.
 There is a lift of $M_{\bar{x}} \ra \overline{M}_{\bar{x}}$.

\end{Lemma}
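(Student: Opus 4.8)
For statement 1, I would begin by making the characteristic of the associated log structure completely explicit. Write $\beta := \alpha\circ\theta : P_X\ra \co_X$ for the prelog structure and $P_0 := (\alpha\circ\theta)^{-1}(\co_{\bar{x}}^\times)$ for the submonoid of $P$ whose elements are sent to units at $\bar{x}$. Feeding $\beta$ into the pushout formula for $M^a$ recalled above, the stalk $(P_X^a)_{\bar{x}}$ is $(P\oplus\co_{\bar{x}}^\times)/\!\sim$, its unit subgroup is exactly the image of $\{(p,u) : p\in P_0\}$, and hence its characteristic is the monoid cokernel $P/P_0$. Therefore the map on characteristics induced by the canonical morphism $\psi : P_X^a\ra M$ (which exists by the universal property of the associated log structure applied to the prelog map $\theta$) is precisely $P/P_0\ra \overline{M}_{\bar{x}}$. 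Since ``$\theta$ is a chart'' means by definition that $\psi$ is an isomorphism, this forces the characteristic map to be an isomorphism, which settles the ``only if'' direction at once.

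For the ``if'' direction I would assume $P/P_0\ra \overline{M}_{\bar{x}}$ is an isomorphism, so $\psi$ induces an isomorphism on characteristics at $\bar{x}$, and then show that any morphism of log structures inducing an isomorphism on characteristic stalks is itself an isomorphism on the corresponding stalks. Both $P_X^a$ and $M$ sit in the canonical sequence $1\ra \co_{\bar{x}}^\times\ra (-)_{\bar{x}}\ra \overline{(-)}_{\bar{x}}\ra 0$, and $\psi$ restricts to the identity on $\co_{\bar{x}}^\times$; a short diagram chase then gives bijectivity of $\psi_{\bar{x}}$ (for surjectivity, lift a given characteristic through the isomorphism and correct by a unit, which lies in the common $\co_{\bar{x}}^\times$; for injectivity, two elements with equal image have equal characteristics, hence differ by a unit, which must be trivial by integrality of the fine monoid $M_{\bar{x}}$). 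Finally, because $M$ and $P_X^a$ are fine, hence coherent, an isomorphism on the stalk at $\bar{x}$ spreads to an isomorphism over an \'etale neighborhood of $\bar{x}$, so $\theta$ is a chart \'etale locally.

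For statement 2, I would simply specialize statement 1 to $P=\overline{M}_{\bar{x}}$ with $\theta$ the given section of $M_{\bar{x}}\ra \overline{M}_{\bar{x}}$. Since $\overline{M}_{\bar{x}}$ is sharp and $\theta$ is a section, an element $a\in \overline{M}_{\bar{x}}$ satisfies $\alpha\circ\theta(a)\in \co_{\bar{x}}^\times$ only when its characteristic, namely $a$ itself, is $0$; thus $P_0=\{0\}$ and the characteristic map $P/P_0=\overline{M}_{\bar{x}}\ra \overline{M}_{\bar{x}}$ is the identity. Statement 1 then yields that $\theta$ is a chart. Statement 3 is quoted from \cite{Olsson_LogGeom} and requires no separate argument.

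The only genuinely nonformal point I expect is the passage from a stalkwise isomorphism to an isomorphism over an \'etale neighborhood; everything else is manipulation of pushouts of monoids together with the universal property of the associated log structure, the monoid identity $\overline{P_X^a}_{,\bar{x}}=P/P_0$ being the computational core but routine once the pushout description is in hand.
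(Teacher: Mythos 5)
Your proposal is correct and follows essentially the same route as the paper: both reduce the chart condition to a stalk statement, identify the associated log structure's stalk via the amalgamated sum $P\oplus_{P_0}\co^\times_{\bar{x}}$ (your computation $\overline{P^a_X}_{\bar{x}}=P/P_0$ is exactly this), prove surjectivity by correcting with a unit, and prove injectivity by a cancellation that rests on integrality of the fine monoid $M_{\bar{x}}$ --- the paper does this directly on representatives $(p,u)\sim(p',u')$, while you package it as ``isomorphism on characteristics implies isomorphism,'' which is only a cosmetic difference. Both arguments (yours explicitly, the paper's implicitly in the phrase ``it amounts to showing'') also rely on the same standard spreading-out fact that a stalkwise isomorphism of fine log structures extends to an \'etale neighborhood, and your treatments of statements 2 and 3 match the paper's.
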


\begin{proof}
1. This is alluded in  \cite{K.Kato}. The \lq\lq only if\/" part  is clear. We prove the \lq\lq if\/" part.
It amounts to showing the natural homomorphism
 \[ P \oplus _{(\alpha\circ\theta)^{-1}(\co _{\bar{x}}^\times )} \co ^\times _{\bar{x}} \ra M_{\bar{x}} \]
 is an isomorphism.
 It is clearly surjective. For 1-1, let $\theta (p) +u = \theta (p') + u'$, where
 $(p,u), (p', u') \in M_ {\bar{x}}\times \co _{\bar{x}} ^\times$.
 Then $\overline{\theta (p)} = \overline{\theta (p')} $ in $\overline{M}_{\bar{x}}$ and so by assumption
 \begin{equation}\label{1} p+v = p'+v' \end{equation}
  for some
 $v, v' \in (\alpha\circ\theta)^{-1}(\co _{\bar{x}}^\times)$. Here we abuse notation.
 Thus, $\theta (p)+v = \theta (p') +v'$. This together with
  $\theta (p) +u = \theta (p') + u'$ implies that $\theta (p)+v + u' = \theta (p') + v' +u' =
  \theta (p)+u + v'$, which in turn shows that
  \begin{equation}\label{2} u'+v = u+v'. \end{equation}
  By \eqref{1} and \eqref{2}, $(p,u)\sim (p',u')$.

  2. This follows from 1 since $(\alpha\circ \theta)^{-1}(\co _{\bar{x}} ^\times)=\{ 0\} $.
   \end{proof}

\subsection{} For a prestable curve $C/S$, there is a canonical log structure $M^{C/S}$ on $C$
(resp. $N^{C/S}$ on $S$) induced from the log structure on the stacks $\fM _{g,1}$ (resp. $\fM _g$)
of genus $g$, $1$-pointed (resp. no-pointed) prestable curves (\cite{Mo, F.Kato_LogSmoothCurve}).
Here the log structures on a smooth cover is given by
the boundary divisors of singular curves
and the log structures are defined on smooth topology (\cite{Olsson_LogGeom}).
Thus, the natural log morphism $(C,M^{C/S})\ra (S,N^{C/S})$ is a log prestable curve.
More generally, for a given homomorphism $N^{C/S}\ra N$  from the canonical log structure $N^{C/S}$
to a fine log structure $N$ on $S$,
 the amalgamated sum $M$ defined to be the log structure associated to
the prelog structure \[ M^{C/S}\oplus _{\pi^{-1}N^{C/S}} \pi^{-1}N \]
yields a  log prestable curve over $(S,N)$. All log prestable curves are obtained
 in this manner according to Proposition 2.3 (and Remark 2.4) in \cite{F.Kato_LogSmoothCurve}
 and  Theorem 2.7 in \cite{Olsson_UnivLog}.

\subsection{3rd definition} A pair $(C/S, N^{C/S} \ra N)$ is called a {\em log prestable curve} over $(S,N)$ if
$C/S$ is a prestable curve over $S$, and $N^{C/S}\ra N$ is a
homomorphism from the canonical log structure $N^{C/S}$ on $S$,
induced from the family $C/S$, to a fine log structure $N$ on $S$.

\subsection{Special coordinates}\label{SpecialCoord}

   Let $C/S$ be a prestable curve; let $p$ be a nodal point;
   $A:=\co _{\pi( \bar{p})}$; and
                            $R:=\mathcal{O}_{\bar{p}}$ be
                             the strict henselianization
                           of $A[u,v]/(uv- t)$ at the ideal $(u,v,\mathfrak{m}_A)$,
                           where $t\in\mathfrak{m}_A$.

   Then we  call $(u,v)$  a {\em special coordinate pair}
   at the node $p$ defined by the ideal $(u,v, \mathfrak{m}_A)$.
   Based on the coordinates, in fact, the canonical log structure is constructed.
   Locally define prelog structures $\NN ^2 \ra \mathcal{O}_C$ by
   sending $e_1\mapsto u$, $e_2\mapsto v$ and define $\NN \ra \mathcal{O}_S$ by sending $e_1\mapsto uv$.
   Since by Lemma \ref{SpecialLemma}
    special coordinates are unique up to multiplications by unique elements in $\mathcal{O}_C^\times$ whose
   product is in $\mathcal{O}_S^\times$, such prelog structures are isomorphic up to unique isomorphism. They
   can be glued together. See \cite{F.Kato_LogSmoothCurve} for the detail of the construction of the canonical
   log structure on a (pre)stable curve $C/S$.

    \begin{Lemma}\label{SpecialLemma}  {\em (\cite{F.Kato_LogSmoothCurve})}
  Provided with the notation as in the beginning of \ref{SpecialCoord}, we have:

 1. Let $u'$, $v'$ be elements in $R$. Suppose that the ideals
 $(u',v', \mathfrak{m}_A)$ and $(u,v, \mathfrak{m}_A)$ coincide, and the product $u'v'$ is an element in $A$.
          Then there are  $a, b \in R^{\times}$ such that  $u'=au$, $v'=bv$
          (or $u'=av$, $v'=bu$) and $ab\in A^\times$.

 2. Suppose that $u^l = au^l$ and $v^l = bv^l$, where $l\in \NN _{\ge 1}$, $a, b \in R^{\times}$. If
     $ab\in A^\times$, then
     $a=b=1$.

 \end{Lemma}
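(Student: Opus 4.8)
The plan is to reduce both statements to a computation in the completion $\hat{R}$ of $R$, where the local structure of the node is fully explicit. Since $R$ is a Noetherian local ring, $R\hookrightarrow \hat{R}$ is injective, so any identity "$x=0$" may be checked in $\hat{R}$, and $\hat{R}\cong \hat{A}[[u,v]]/(uv-t)$. The essential tool is the unique normal form: every element of $\hat{R}$ is written uniquely as $c+\sum_{i\ge 1}a_i u^i+\sum_{j\ge 1}b_j v^j$ with $c,a_i,b_j\in \hat{A}$, the multiplication being governed by $u\cdot v=t$, so that $u^iv^j=t^{\min(i,j)}u^{(i-j)^+}v^{(j-i)^+}$. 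Note that $\mathfrak{m}_R=(u,v,\mathfrak{m}_A)$ is the maximal ideal and $A\cap \mathfrak{m}_R=\mathfrak{m}_A$, so the hypothesis $u'v'\in A$ together with $u',v'\in \mathfrak{m}_R$ forces $u'v'\in \mathfrak{m}_A$.

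For Part 1, I would first reduce modulo $\mathfrak{m}_A$ to the henselian node $\bar{R}=(k[u,v]/(uv))^{\mathrm{sh}}$, where $t\mapsto 0$. There $\bar{u}'\bar{v}'=0$ while $\bar{u}',\bar{v}'$ generate the maximal ideal; since the two minimal primes $(\bar{u}),(\bar{v})$ are the only branches and $(\bar{u})\cap(\bar{v})=0$, the relation $\bar{u}'\bar{v}'=0$ together with the generation of $\mathfrak{m}/\mathfrak{m}^2$ forces, after possibly interchanging $u'$ and $v'$, that $\bar{u}'$ is a uniformizer of one branch and $\bar{v}'$ of the other; hence $\bar{u}'=\bar{a}_0\bar{u}$ and $\bar{v}'=\bar{b}_0\bar{v}$ with $\bar{a}_0,\bar{b}_0\in \bar{R}^\times$. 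I would then lift to $R$ and use the normal form once more: writing $u'=\sum_i a_iu^i+\sum_j b_jv^j$ (the constant term lying in $\mathfrak{m}_A$), the condition $u'v'\in A$, read off in the $\hat{A}$-basis, kills the coefficients of the wrong branch and pins down $u'=au$, $v'=bv$ with $a,b$ units. Finally $u'v'=ab\,uv=ab\,t\in A$ together with $ab\in R^\times$ yields $ab\in A^\times$.

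For Part 2, the point is the annihilator of $u^l$. Expanding $(a-1)u^l=0$ in normal form and using $v^ju^l=t^{\min(j,l)}v^{(j-l)^+}u^{(l-j)^+}$, one reads off that $a-1$ has no constant and no $u$-part, i.e. $a-1=\sum_{j\ge 1}\beta_jv^j$, with $\beta_j\,t^{\min(j,l)}=0$ for every $j$; in particular $\beta_j t^j=0$ for all $j$. Symmetrically $b-1=\sum_{i\ge 1}\gamma_i u^i$ with $\gamma_i t^i=0$. Now expand $ab=\big(1+\sum_j\beta_jv^j\big)\big(1+\sum_i\gamma_iu^i\big)$ and impose $ab\in A$: the coefficient of $u^i$ for $i\ge 1$ must vanish, which reads $\gamma_i+\sum_{j\ge 1}\beta_j\gamma_{i+j}t^j=0$. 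Since $\beta_jt^j=0$, every term of the sum vanishes and $\gamma_i=0$; symmetrically the vanishing of the coefficient of $v^j$ together with $\gamma_it^i=0$ gives $\beta_j=0$. Hence $a=b=1$.

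The main obstacle is Part 1 when $t\ne 0$: the reduction modulo $\mathfrak{m}_A$ only determines $u',v'$ up to $\mathfrak{m}_A$ and up to the branch swap, and upgrading this to the exact equalities $u'=au$, $v'=bv$ requires controlling the $t$-torsion in the normal form. This is precisely the phenomenon that makes the hypothesis $u'v'\in A$ (rather than merely $u'v'\in \mathfrak{m}_R^2$) indispensable, and the same torsion is what forces the condition $ab\in A^\times$ in Part 2 to be imposed by hand.
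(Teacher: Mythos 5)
Your Part 2 is correct, and it is in substance the paper's own route: the paper's proof consists of citing F.~Kato for Part 1 and for Part 2 with $l=1$, plus the observation that the $l=1$ computation extends to all $l$ because $R\subset\hat{R}$; your normal-form calculation in $\hat{R}\cong\hat{A}[[u,v]]/(uv-t)$ carries out exactly that extension. The genuine gaps are in Part 1, where both sentences that carry the argument fail as written. The hypothesis $u'v'\in A$ does \emph{not} ``kill the coefficients of the wrong branch'': take $u'=(1+v^2)u=u+tv$ and $v'=(1+v^2)^{-1}v$; then $(u',v',\mathfrak{m}_A)=(u,v,\mathfrak{m}_A)$ and $u'v'=uv=t\in A$, yet the $v$-coefficient of $u'$ in your normal form is $t\neq 0$. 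What the hypothesis actually forces is weaker, and is what you in fact need: the constant term and the wrong-branch coefficients of $u'$ lie in $t\hat{A}$, which is precisely the condition for $u'\in u\hat{R}$. Proving this requires real work --- a Krull-intersection argument (in $\hat{A}/t\hat{A}$) to put the constant terms of $u'$ and $v'$ into $t\hat{A}$, and then an induction on degree to push the wrong-branch coefficients into $t\hat{A}$ --- none of which appears in your sketch.

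The closing inference ``$u'v'=ab\,t\in A$ together with $ab\in R^\times$ yields $ab\in A^\times$'' is false, and the paper's own example immediately after the lemma refutes it: with $t=0$, $u'=(1+u)u$, $v'=(1+v)v$, one has $a=1+u$, $b=1+v$, hence $ab=1+u+v\in R^\times$ and $ab\,t=0\in A$, but $ab\notin A$. The factors $a,b$ are determined only up to the annihilators of $u$ and $v$ (which contain nonzero $t$-torsion elements), so $ab$ itself is not well defined by $u',v'$; the actual content of the lemma is that \emph{some} choice of $a,b$ satisfies $ab\in A^\times$, and producing that choice --- e.g.\ absorbing the $u$-part of $ab$ into a redefinition of $b$ by an element of the annihilator of $v$, and the $v$-part into $a$ --- is exactly the step your proof skips (in the paper's example this is the passage to $a=1+u-\frac{v}{1+v}$, $b=1+v-\frac{u}{1+u}$ with $ab=1$). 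Relatedly, the conclusion of Part 1 is existential over $R$ and $A$, while all your work happens in $\hat{R}$: injectivity of $R\to\hat{R}$ transfers identities (which suffices for Part 2) but not factorizations, so one still needs descent, e.g.\ $u\hat{R}\cap R=uR$ and $t\hat{A}\cap A=tA$ by faithful flatness, together with the annihilator adjustment performed inside $R$. (For comparison, the paper does not prove Part 1 at all; it quotes Lemma 2.2 of F.~Kato's paper, where these points are dealt with.)
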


 \begin{proof}
 The first statement and the second statement with $l=1$ are
 proven in \cite{F.Kato_LogSmoothCurve}. The proof of Lemma 2.2 in
 \cite{F.Kato_LogSmoothCurve} for $l=1$ works also for the general $l$ since $R\subset \hat{R}$, where
 $\hat{R}$ is the $t$-adic completion of $R$.
 \end{proof}

     For example, in the localization of $\Lambda [u,v]/(uv)$ at the ideal $(u,v)$,
  let $u'=(1+u)u$, $v'=(1+v)v$. Then we have $u'=(1+u-\frac{v}{1+v})u$
      and $v'=(1+v-\frac{u}{1+u})v$, where the product $(1+u-\frac{v}{1+v}))(1+v-\frac{u}{1+u}) =1$.

   %
    %

 \begin{Cor}\label{LoguLogv} Let $l$ be a positive integer and let $\pi: (C,M)\ra (S,N)$ be
 a log prestable curve.  Then with the notation as in the beginning of subsection \ref{SpecialCoord},
in $M_{\bar{p}}$ there is a unique pair $\gamma _u$, $\gamma _v$
--- which will be denoted by $l\log u$ and $l\log v$, respectively
--- such that $\gamma _u +\gamma _v  \in  N_{\pi (\bar{p})}$ and
$\alpha (\gamma _u)=u^l$, $\alpha (\gamma _v)=v^l$, where $\alpha$
is the structure map $M_{\bar{p}}\ra \co _{\bar{p}}$.
\end{Cor}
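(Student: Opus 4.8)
The plan is to get existence for free from the explicit construction of the canonical log structure in \ref{SpecialCoord}, and to reduce uniqueness to Lemma \ref{SpecialLemma}.2. For existence, recall that the special coordinate pair $(u,v)$ determines, \'etale locally near $\bar p$, the prelog structure $\NN^2\ra\co_C$ with $e_1\mapsto u$, $e_2\mapsto v$, whose associated log structure lies in $M^{C/S}$ and hence maps to $M$ by the amalgamation defining a log prestable curve. Writing $\log u,\log v\in M_{\bar p}$ for the images of $e_1,e_2$, one has $\alpha(\log u)=u$, $\alpha(\log v)=v$, and $\log u+\log v$ is the image of $\log(uv)\in N^{C/S}$, so it lies in $N_{\pi(\bar p)}$. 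Then $\gamma_u:=l\log u$ and $\gamma_v:=l\log v$ satisfy $\alpha(\gamma_u)=u^l$, $\alpha(\gamma_v)=v^l$, and $\gamma_u+\gamma_v=l(\log u+\log v)\in N_{\pi(\bar p)}$ since $N_{\pi(\bar p)}$ is closed under the monoid operation.

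For uniqueness I take an arbitrary pair $(\gamma_u,\gamma_v)$ satisfying the three conditions and show it equals $(l\log u,l\log v)$. The first step is to pin down the characteristic. By the description of a node in \ref{logprecurve2} and the following remark, $\overline{M}_{\bar p}\cong\NN^2\oplus_\NN Q$ with $Q=\overline{N}_{\bar s}$ and $\NN\ra\NN^2$ the diagonal, so $\alpha\circ\theta$ carries the class of $(se_1+re_2,q)$ to $u^sv^rg(q)$ up to a unit, where $g(q)\in A$ is the image of $q$ under $Q\ra N\ra\co_S$. Comparing with $\alpha(\gamma_u)=u^l$, a short monomial analysis in $R$ — treating separately the case $t\ne 0$, where $R$ is a domain and $t\in\mathfrak{m}_A$, and the case $t=0$, where $u,v$ are zero divisors on the two branches — forces $r=0$, $s=l$, and $g(q)\in A^\times$; since $Q$ is sharp the last condition gives $q=0$. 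Hence $\overline{\gamma_u}=l\bar e_1=\overline{l\log u}$, and symmetrically $\overline{\gamma_v}=l\bar e_2=\overline{l\log v}$.

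Since the residue characteristic is zero, Lemma \ref{chart}.3 provides a splitting $M_{\bar p}\cong\overline{M}_{\bar p}\oplus\co_{\bar p}^\times$, so elements of equal characteristic differ by a unit. Comparing $\gamma_u$ with $l\log u$ and $\gamma_v$ with $l\log v$ yields units $a,b\in R^\times$ with $u^l=a\,u^l$ and $v^l=b\,v^l$, and $\gamma_u+\gamma_v$ differs from $l\log u+l\log v$ by the unit $ab$. It remains to verify the hypothesis $ab\in A^\times$ of Lemma \ref{SpecialLemma}.2. Both $\gamma_u+\gamma_v$ and $l\log u+l\log v=\pi^b(n_0)$ lie in the image of $N_{\pi(\bar p)}$, so $\pi^b(n)=ab\cdot\pi^b(n_0)$ for some $n\in N_{\pi(\bar p)}$. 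The retraction $\overline{M}_{\bar p}=\NN^2\oplus_\NN Q\ra Q$ sending $\bar e_1\mapsto q_0$, $\bar e_2\mapsto 0$, $q\mapsto q$, where $q_0\in Q$ is the image of $1\in\NN$ with $\bar e_1+\bar e_2=q_0$, restricts to the identity on $Q$ and so shows $\overline{N}_{\bar s}\ra\overline{M}_{\bar p}$ is injective; hence $\bar n=\bar{n_0}$. Splitting $N$ at $\bar s$ gives $n=n_0+d$ with $d\in A^\times$, and cancelling $\pi^b(n_0)$ in the integral monoid $M_{\bar p}$ yields $ab=\pi^b(d)\in A^\times$. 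Lemma \ref{SpecialLemma}.2 then forces $a=b=1$, so $\gamma_u=l\log u$ and $\gamma_v=l\log v$.

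I expect the main obstacle to be the monomial analysis identifying $\overline{\gamma_u}=l\bar e_1$, as it requires separate bookkeeping in the integral case $t\ne 0$ and the nodal case $t=0$, together with the care needed to see that a factor $g(q)$ can only be a unit when $q$ vanishes. Once the characteristics are matched, the argument is a clean reduction to the already-established Lemma \ref{SpecialLemma}.2, with the $N$-membership condition serving exactly to supply its hypothesis $ab\in A^\times$.
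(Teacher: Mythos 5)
Your proposal is correct and follows essentially the same route as the paper: existence is read off from the local structure of a log prestable curve at the node, and uniqueness is reduced --- by matching characteristics in $\overline{M}_{\bar p}$, splitting off units $a,b\in \co_{\bar p}^\times$, and using the condition $\gamma_u+\gamma_v\in N_{\pi(\bar p)}$ to supply the hypothesis $ab\in A^\times$ --- to Lemma \ref{SpecialLemma}.2, which is precisely the paper's argument (the paper simply declares the characteristic-matching step ``clear''). One caution on the step you flag as delicate: when $t\ne 0$ the ring $R$ need not be a domain, since $A$ itself may be reducible or non-reduced, so instead of your $t\ne 0$ versus $t=0$ dichotomy one should reduce modulo $\mathfrak{m}_A R$ --- as $t\in\mathfrak{m}_A$, the quotient $R/\mathfrak{m}_A R$ is always the strict henselization of $(A/\mathfrak{m}_A)[u,v]/(uv)$, where $u^l\ne 0$ rules out both $q\ne 0$ (since $g(q)\in\mathfrak{m}_A$) and $r>0$, and comparing orders on the $u$-branch forces $s=l$.
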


\begin{proof} The existence follows from the second definition of log prestable curves.
If $(\gamma _u',\gamma _v')$ is another such pair. Then
it is clear that $\gamma _u'=\gamma _u$ and $\gamma _v'=\gamma _v$ in
$\overline{M}_{\bar{p}}$. Hence, $\gamma _u'=\gamma _u+c_u$, $\gamma _v'=\gamma _v+c_v$, where
$c_u, c_v \in \co _{\bar{p}}^\times $. Since $\gamma _u'+\gamma _v' , \gamma _u+\gamma _v \in
N_{\pi(\overline{p})} $ and $\gamma _u'+\gamma _v'=\gamma _u+\gamma _v$ in
$\overline{N}_{\pi(\overline{p})}$, we see that $c_uc_v \in \co
_{\pi (\bar{p})}^\times$. Now we apply Lemma \ref{SpecialLemma},
to conclude the proof.
\end{proof}

\subsection{A minimal log prestable curve}\label{MinimalCurve}

We call a log prestable curve  $$(C/S,N^{C/S}\ra N)$$ minimal in
weak sense (\cite{We}) when $N$ is locally free and there is no proper
{\em locally free} submonoid of $N$ containing the image of $N^{C/S}$.
This amounts that for all $s\in S$, $\overline{N}_{\bar{s}}$ is a
finitely generated free monoid and there is no proper free
submonoid of $\overline{N}_{\bar{s}}$ containing \[
\mathrm{Im}(\overline{N}^{C/S}_{\bar{s}}\ra
\overline{N}_{\bar{s}}) . \] It is called {\em minimal} in strong
sense furthermore if for every irreducible $b\in \overline{N}_{\bar{s}}$,
there is an irreducible element $a\in
\overline{N}_{\bar{s}} ^{C/S}$ such that $a=lb$ for some positive
integer $l$. Since we will use only \lq minimal in strong sense' in this paper,
by \lq minimal', we will mean \lq minimal in strong sense' unless otherwise
stated.

\subsection{Remark}
  It is straightforward to check
that the fibered categories $\mathfrak{M}_g^{\mathrm{log}}$ of log
prestable curves in either definitions, 1st one and 3rd one, are
equivalent as following. Let us choose pullbacks once for all and
then identify them, for example $(f\circ g)^* M = g^*(f^* M)$, if
there exist canonical isomorphisms. A morphism, i.e., arrow,
$(C',M')/(S',N') \ra (C,M)/(S,N)$ is a pair $(h, \phi )$, where $h:
(C',M') \ra (C,M)$ and $\phi : (S',M')\ra (S, M)$ are morphisms
between log schemes such that:
\begin{itemize}
\item  The underlying scheme morphisms provide a cartesian square diagram
\begin{equation}\label{CatesianCurve} \begin{CD} C' @>>{\underline{h}}> C \\
                 @VV{\underline{\pi} '}V @VV{\underline{\pi}}V \\
                   S' @>>{\underline{\phi}}> S . \end{CD}\end{equation}
\item  $h^b: h^*M\ra M'$ and $\phi ^b: \phi ^*N\ra N'$ are isomorphisms making the diagram
\[ \begin{CD} M' @<<< \underline{h}^*M  \\
                      @AAA      @AAA \\
                         \underline{\pi} ^* N'  @<<<
                         \underline{h}^*\underline{\pi} ^*N=(\underline{\pi} ')^*\underline{\phi} ^*N
                      \end{CD}\]
commute over log structure maps.
\end{itemize}

In view point of the third definition, a morphism  $(C'/S',
N^{C'/S'}\ra N')\ra (C/S, N^{C/S}\ra N)$ is a pair
$(\underline{h}:C'\ra C, \phi : (S',N')\ra (S,N))$ such that
$\underline{h}$ and $\underline{\phi}$ give rise to a cartesian square
diagram of underlying schemes as in \eqref{CatesianCurve}  and the
diagram
\[ \begin{CD} \underline{\phi} ^*N @>>_{\phi ^b}> N' \\
                       @AAA        @AAA \\
                       \underline{\phi} ^* N^{C/S} @>>{=}> N^{C'/S'}
     \end{CD}\]
which commutes over log structure maps. Here the equality means
the second part in the pair of the canonical isomorphisms $(\underline{h}^*M^{C/S},
\underline{\phi} ^*N^{C/S}) \ra (M^{C'/S'}, N^{C'/S'})$ between
two pairs of canonical log structures on $C'/S'$. We will see
later that the fibered category of log prestable curves
 is an algebraic stack over the category $(\mathrm{Sch}/\Lambda)$ of
schemes over $\Lambda$.


\section{Log twisted FM type spaces}

This section deals with
 the allowed targets of log stable maps, called
log twisted FM type spaces.

\subsection{(log) FM type spaces}\label{FMtypespaces}

 An algebraic space $W$ over a scheme $S$ is called a  {\em FM type space}
if  $W\ra S$ is a projective, flat map whose geometric fibers are
semi-stable varieties of form $xy=0$, i.e., for every $s\in S$,
\'etale locally, there is an \'etale map $W_{\bar{s}} \ra \Spec
k(\bar{s})[x,y,z_1,...,z_{r-1}]/(xy)$ where $x,y, z_i$ are
independent variables with only one relation $xy=0$.

Furthermore, if $W/S$ allows a special log morphism, then we call it
a {\em log FM type space}.

 \bigskip

Here we say that for a FM type space $W/S$, a log smooth morphism
$$\pi : (W,M^{W/S}) \ra (S,N^{W/S})$$ is {\em special} (\cite{Olsson_UnivLog})  if:
\begin{itemize}

\item $M^{W/S}$ and $N^{W/S}$ are locally free.

\item For any $w\in W$, the induced map $\overline{\pi ^{*} N}^{W/S}_{\bar{w}} \ra \overline{M}^{W/S}_{\bar{w}}$
 is either an isomorphism or the part
of the cocatesian diagram
\[ \xymatrix{ \NN \ar[r]^{\Delta}_{e_1\mapsto e_1+e_2}\ar[d]_{h} & \NN ^2\ar[d] \\
                                      \overline{N}^{W/S}_{\pi(\bar{w})} \ar[r] & \overline{M}^{W/S}_{\bar{w}},
}\] where $h(e_1)$ is an irreducible element.

\item The natural map $W^{\mathrm{sing}}_{\bar{s}} \ra \mathrm{Irr} \overline{N}_{\bar{s}} $
   induced by the above diagram gives rise to a bijection
         $$\mathrm{Irr}   W^{\mathrm{sing}}_{\bar{s}} \ra \mathrm{Irr}\overline{N}_{\bar{s}}, $$ where
         $ \mathrm{Irr}   W^{\mathrm{sing}}_{\bar{s}} $ is the set of irreducible components of
 the singular locus $W^{\mathrm{sing}}_{\bar{s}}$ of $W_{\bar{s}}$ and
$\mathrm{Irr} \overline{N}_{\bar{s}} $  is the set of irreducible elements of $\overline{N}_{\bar{s}}  $.

\end{itemize}

We will also call the special log structure the {\em canonical} one
and denote it by $M^{W/S}$ and $N^{W/S}$, respectively
as in the definition above.

 \subsection{}

As a generalization of log twisted curves
(\cite{Olsson_LogTwisted}) to a FM type space $W/S$ we define the
following.

A {\em log twisted FM type space} is
a pair $(W/S, N^{W/S}\ra N)$, where $W/S$ is a log FM type space
and $N^{W/S} \ra N$ is a {\em simple} map from the canonical log structure $N^{W/S}$ to a
locally free log structure $N$ of $S$.
By Lemma \ref{chart}, this amounts that \'etale locally
at $s\in S$ there is a commuting diagram of charts:
\begin{equation}\label{LogTwisting} \xymatrix { N^{W/S} \ar[r] & N   \\
                    \NN ^{m} \ar[r] \ar[u]_{\theta ^{W/S}}& \NN ^{m} \ar[u]_{\theta},
                    }\end{equation}
where the bottom map is a diagonal map between the constant sheaves $\NN ^m$
                    and $\theta ^{W/S}$ (resp. $\theta$) induces an isomorphism
                    from $\NN ^{m}$ to $\overline{N}^{W/S}_{\bar{s}}$
                    (resp. $\overline{N}_{\bar{s}}$).

\subsection{}\label{ExtendedLogTwistedSpace}
An {\em extended log twisted FM type space} is a pair $(W/S,
N^{W/S}\ra N)$, where $W/S$ is a log FM type space and $N^{W/S} \ra
N$ is an extended simple map from the canonical log structure
$N^{W/S}$ to a log structure $N$ of $S$. This means, as the
definition, that \'etale locally there is a commuting diagram of
charts:
\begin{equation}\label{extendedLogTwisting} \xymatrix { N^{W/S} \ar[rr] & & N   \\
                    \NN ^{m} \ar[r] \ar[u] & \NN ^{m}  \ar[r] & \NN ^{m}\oplus \NN ^{m'}, \ar[u]
                    }\end{equation}
where the first bottom map is a diagonal map, the second bottom
map is the natural monomorphism $(\mathrm{id}_{\NN ^m},0)$, and
vertical maps induce isomorphisms between $\NN ^m$ (resp. $\NN
^m\oplus \NN ^{m'}$) and $\overline{N}^{W/S}_{\bar{s}}$ (resp.
$\overline{N}_{\bar{s}}$). In particular, $N$ is a locally free
log structure on $S$. We often write simply $(d,0): N^{W/S} \ra N$
for (\ref{extendedLogTwisting}) with the diagonal map being
$d=(d_1,...,d_m)$.

We endow a log structure $M$ on $W$ by the amalgamated sum
\[ (M^{W/S} \oplus _{\pi ^{-1}N^{W/S}} \pi ^{-1}N)^a.\]
Conversely, from Theorem 2.7 of \cite{Olsson_UnivLog} we arrive at another equivalent definition.

\subsection{}
An {\em  extended log twisted FM type space} is a proper, log
smooth, integral, vertical morphism $\pi :(W, M)\ra (S,N)$ such
that the underlying map $\pi : W\ra S$ is a FM space; the log structure $N$ is locally free; $\pi ^b$ is
an isomorphism on the smooth locus of $W/S$; and at a singular
point of $W/S$, \'etale locally $\pi$ has a chart:
\[\xymatrix{  \pi^{-1} N  \ar[rr] & &  M \\
 Q:=\NN ^{r-1}\oplus \NN  \ar[rr] _{(\mathrm{id},\phi)} \ar[u] & & P:=\NN ^{r-1}\oplus B \ar[u] \\
                                            }\]
                           where the monoid $B$ is the amalgamated sum in
\[\begin{CD}  \NN @>{e\mapsto e_1+e_2}>{\Delta}>  \NN ^2\\
                        @V{\times d}VV @VVV \\
                     \NN   @>{\phi}>> B \end{CD};\]
the induced map $\mathcal{O}_S\ot _{\ZZ[Q]} \ZZ[P]\ra
\mathcal{O}_W$ is smooth; and
for every $s\in S$,
the natural map $W^{\mathrm{sing}}_{\bar{s}} \ra \mathrm{Irr} \overline{N}_{\bar{s}} $
induced by the above diagram gives rise to an injection
$$\mathrm{Irr}   W^{\mathrm{sing}}_{\bar{s}} \ra \mathrm{Irr}\overline{N}_{\bar{s}}. $$

\subsection{Log twisted FM spaces}
For a nonsingular projective variety $X$ over $\bf k$, let
 $X[n]$ be the Fulton-MacPherson configuration space of $n$
labeled distinct points in $X$
 and let $X[n]^+$ be its universal space (\cite{FM}).

 In \cite{KKO}, a Fulton-MacPherson degeneration space or in short, a {\em FM space} $W$ of $X$ over $S$
 is defined to be a pair $(W\ra S, \pi _X:W\ra X)$ of maps, where: $W$ is an algebraic space over a scheme $S$; and
  for every $s\in S$, there are an \'etale neighborhood $T$ of $\bar{s}$, and a cartesian diagram
  \[ \xymatrix{ W_{|_T} \ar[r] \ar[d] & X[n]^+ \ar[d]\\
                          T \ar[r] & X[n]
                          }\] for some $n$
   such that, through the diagram, $\pi _X$ is compatible with the composite
    \[\xymatrix{ X[n]^+\ar[r] & X^{n+1}\ar[rr] _{\mathrm{pr}_{n+1}} & &X. } \]

By the blowup construction of the universal family $X[n]^+$ from
$X[n]\times X$, it is clear that $X[n]^+\ra X[n]$ is a log FM type
space, and thus FM spaces are log FM type spaces.

 There are two more examples constructed in \cite{KS}: $(X_D^{[n]})^+/X_D^{[n]}$
 (the configuration space of $n$ labeled points away from a smooth closed subvariety $D$ in $X$)
and $X_D[n]^+/X_D[n]$ (the configuration space of $n$ labeled
distinct points away from a smooth closed subvariety $D$ in $X$).
The constructions in \cite{KS} are valid over any closed field $\bf k$
instead of $\CC$, without any changes.


\section{Log stable maps}\label{LogStableMaps}

In this section, we define  log stable maps and study their properties.
 We first need to explain what the underlying maps of
 the log stable maps are.

\subsection{Admissible maps and stable unramfied maps}

We recall some definitions in \cite{Li1, KKO}.

\subsubsection{}\label{adm}
A triple
\begin{equation}\tag{$\star$}\label{star} ((C/S, {\bf p}), W/S, f: C \ra W)     \end{equation}
 is called a $n$-pointed, genus $g$, {\em admissible map}
to a FM type space $W/S$ if:

\begin{enumerate}

\item $(C/S, {\bf p}= (p_1,...,p_n))$ is a $n$-pointed, genus $g$, prestable curve over $S$.

\item $W/S$ is a FM type space.

\item $f:C\ra W$ is a map over $S$.

\item ({\em Admissibility}) If a point $p\in C$ is mapped into the relatively singular locus $(W/S)^{\mathrm{\sing}}$ of $W/S$,
then \'etale locally at $\bar{p}$, $f$ is factorized as
\[ \xymatrix{ C  \ar[dd]_{f}\ar[dr]&         & U\ar[ll]\ar[rr] \ar[rd] \ar[dd] &    & \Spec(A[u,v]/(uv- t)) \ar[ld]  \ar[dd]\\
                                                         & S   &     & \Spec A \ar'[l][ll]  &                          \\
                       W   \ar[ur] &       &   V \ar[ll]\ar[rr] \ar[ur]&     & \Spec A[x,y, z_1,...,z_{r-1}]/(xy-\tau ) \ar[ul]
      }\]
     where all 5 horizontal maps
 are formally \'etale; $u, v, x, y, z_i$ are indeterminates;
  $x=u^l$, $y=v^l$ under the far right vertical map for some positive integer $l$;
 $t, \tau$ are elements in the maximal ideal $\mathfrak{m}_A$ of
 the local ring $A$; and $\bar{p}$ is mapped to the point defined by the ideal $(u,v,\mathfrak{m}_A)$.

\end{enumerate}

\medskip

We call a node $\bar{p}$ of $C_{\bar{s}}$ {\em distinguished}
(resp. {\em nondistinguished}) if $f(\bar{p})\in
W^{\mathrm{sing}}_{\pi(\bar{p})}$ (resp. $f(\bar{p})\in
W^{\mathrm{sm}}_{\pi (\bar{p})}$), where $\pi$ is the
map $C\ra S$.

\subsubsection{Remark} The admissibility can be stated as below, too.
                            If $p\in f^{-1}((W/S)^{\mathrm{\sing}})$,   then under $f$,
                            $x=c_1u^l$, $y=c_2v^l$ for some $c_i \in R^\times$ such that $c_1c_2 \in A^\times$
                            where $A:=\co _{\pi( \bar{p})}$;
                            $R:=\mathcal{O}_{\bar{p}}$ is  form of
                             the strict henselianization
                           of $A[u,v]/(uv- t)$ at the ideal $(u,v,\mathfrak{m}_A)$;  $\mathcal{O}_{f(\bar{p})}$ is form
                           of the strict henselianization of $A[x,y, z_1,...,z_{r-1}]/(xy-\tau )$
                           at the ideal $(x,y, z_1,...,z_{r-1}, \mathfrak{m}_A)$;
                           and $t, \tau$ are in the maximal ideal  $\mathfrak{m}_A$ of $A$.

                            Two definitions are equivalent: One direction is clear. We prove the other direction.
                           Consider polynomials
                           $w^l - c_i$ in $R[w]$ where $w$ is an indeterminate. Each polynomial has a linear coprime
                           factorization over $R/\mathfrak{m}_R$. Since $R$ is a henselian ring, the linear
                           factorization can be lifted over $R$, providing a solution $c_i^{1/l} \in R^\times $ to  $w^l=c_i$.
                           Now we have $x=(u')^l$ and $y=(v')^l$, where $u'=c_1^{1/l}u$, $v'=c_2^{1/l}v$.
                           On the other hand, note that $u'v'\in A$ since $(u'v')^l=xy=\tau \in A$.  Therefore, we can apply
                           Lemma \ref{SpecialLemma} to replace
                           $c_i^{1/l}$ by  elements $a_i \in R^\times$
                           such that $a_1a_2 \in A^\times $. Finally we conclude that $u'v' \in \mathfrak{m}_A$.

 \subsubsection{}
 Let a FM type space $W/S$ be equipped with a map $\pi _X: W\ra X$ from $W$ to a scheme $X$.
An admissible map $(\star)$ is called a  {\em stable admissible
map} to a FM type space $W/S$ of $X$ if the {\em stability with
respect to $\pi _X$} holds, namely: The automorphism group
$\mathrm{Aut} _X(f_{\bar{s}})$  is finite for all $s\in S$, where $\mathrm{Aut} _X(f_{\bar{s}})$ consists of
  all pairs $(h,\varphi )$
    of automorphisms $h$ of $C_{\bar{s}}$ preserving
     $n$-labeled points  and automorphisms $\varphi$ of
       $W_{\bar{s}}$ with respect $X$ such that they are compatible with
       $f_{\bar{s}}$,
     i.e., $h({\bf p}_{\bar{s}})={\bf p}_{\bar{s}}$, $\pi _X\circ \varphi = \pi _X$,
     and $f_{\bar{s}}\circ h = \varphi \circ f_{\bar{s}}$.

\subsubsection{}\label{RamifiedMaps} Let $\mu =(\mu _1,...,\mu _n) \in \NN ^n_{\ge 1}$.
A stable admissible map   $(\star)$ is called a  {\em stable
$\mu$-ramified map} to a FM space of a smooth projective scheme
$X$ over $\bf k$ if:

\begin{itemize}

\item $W/S$ is a FM space of $X$.

\item For all $s\in S$, $f(p_i)_{\bar{s}}$ are pairwise distinct for all $i$.

\item ({\em Strong Stability}) For all $s\in S$, $\mathrm{Aut}_X (f_{\bar{s}})$ is a finite group.
Furthermore, every end component of $W_{\bar{s}}$ contains either a non-line image of
an irreducible component of $C_{\bar{s}}$ or the images of at least two labeled points.

\item $f((C/S)^{\sing})\subset (W/S)^{\sing}$ and $f$ is unramified everywhere
 on the relatively smooth locus $(C/S)^{\sm}$, possibly except at the labeled points.

 \item The ramification order at $p_i$ is exactly $\mu _i$.


\end{itemize}

Here the end components of $W_{\bar{s}}$ are the screen components
which correspond the end nodes of the dual graph of $W_{\bar{s}}$.
See \cite{KKO} for the precise definition.

\medskip

When $\mu =(1,...,1)$, we call the map a {\em stable unramfied
map}. Here we follow the ramification order convention that
ramification order $2$ means the simple ramification.

\subsection{Log stable maps} Combining all previous notions,
we introduce a series of definitions.

\subsubsection{}  A pair $((C,M)/(S,N), {\bf p})$ is called a $n$-pointed, genus $g$,
   (resp. minimal) log prestable curve over $(S,N)$ if
$(C,M)/(S,N)$ is a genus $g$ (resp. minimal) log prestable curve
and $(C/S,{\bf p})$ is a $n$-pointed prestable curve over $S$.

\subsubsection{}\label{LogPrestable}  A log morphism
\begin{equation}\tag{$\star\star$}\label{twostars}    \left( f: (C,M_C, {\bf p})\ra
(W, M_W)\right) /(S,N) \end{equation}
      is called a $(g,n)$ {\em log prestable map over $(S,N)$}
       if:

\begin{enumerate}

    \item $((C,M)/(S,N), {\bf p})$ is a $n$-pointed, genus g, minimal log prestable curve.

    \item $(W, M_W) /(S,N)$ is an extended log twisted FM type space.

    \item  ({\em Corank = \# Nondistinguished Nodes Condition})
              For every  $s\in S$, the rank of $\mathrm{Coker}(N^{W/S}_{\bar{s}} \ra N_{\bar{s}})$ coincides with
                the number of nondistinguished nodes on $C_{\bar{s}}$.

    \item $f: (C,M_C)\ra (W, M_W)$ is a log morphism over $(S,N)$.

    \item\label{LogAdmissible} ({\em Log Admissibility}) either of the following conditions, equivalent
    under the above four conditions, holds:

    \begin{itemize}

    \item $\underline{f}$ is admissible.

    \item $f^b: f^*M_{W}\ra M_C$ is simple at every distinguished node.

    \end{itemize}

   \end{enumerate}



\subsubsection{Log Admissibilty}\label{description}

We want to see the explicit meaning of  the last condition
(\ref{LogAdmissible}) in \ref{LogPrestable} under the rest
conditions imposed. Provided with the notation in the admissible
condition in \ref{adm},
        there exist $\log x$ and $\log y$ in $(M_W)_{f(\bar{p})}$
    such that $\log x + \log y \in N_{\bar{s}}$,
     $\alpha _W(\log x) = x$, and $\alpha _W (\log y)=y$,
    where $\bar{p}\mapsto \bar{s}$ under $C\ra S$, and $\alpha _W: M_W\ra \mathcal{O}_W$
    is the log structure map.
      Then $f^b(\log x)$ and $f^b(\log y)$ must be the $l\log u$ and the $l\log v$, respectively, since
     the pair $(f^b(\log x), f^b(\log y))$  satisfies the assumption in Corollary \ref{LoguLogv}.
Therefore, at a distinguished node $p$, the
log prestable map is described as: at chart levels
there is a diagram
\[ \left(\begin{array}{c}
         \xymatrix{
       \NN ^2\oplus _{\NN }\NN \ar[rrrr] ^{\log x,\ \log y \ \ \mapsto \ \ l\log u,\ l\log v}
       & &  & & \NN ^2 \oplus _{\NN} \NN \\
         &  & \NN   \ar[ull] ^{e_1\mapsto (0,e_1)}
             \ar[urr] _{e_1\mapsto  (0, e_1)}&  & }\end{array}\right) \oplus \NN ^{m+m'-1} \]
               for some positive integer $l$, where
               LHS and RHS amalgamated sums are given by
               \[ \begin{array}{lcr} \xymatrix{ \NN \ni e_1\  \  \  \  \ar@{|->} [r] \ar@{|->}[d] & de_1\in \NN \\
                                     \NN ^2 \ni \  e_1+e_2 &            }   &; &
                                     \xymatrix{ \NN \ni e_1\  \  \  \  \ar@{|->} [r] \ar@{|->}[d] & \Gamma e_1\in \NN \\
                                     \NN ^2 \ni \  e_1+e_2 &            }   \end{array}
                                        \]
               for some positive integers    $d, \Gamma $, respectively.

\medskip

Note also that there is a natural map $\mathrm{Irr} \overline{N}
^{C/S}_{\bar{s}}\ra \mathrm{Irr} \overline{N}_{\bar{s}}$ by
sending $a$ to $b$ if $\Gamma b=a$ under the map
$\overline{N}^{C/S}_{\bar{s}}\ra \overline{N}_{\bar{s}}$ for some
positive integer $\Gamma$.

 \medskip

  Summing these observations together, we claim that if $f$ is a log prestable map then,
      \'etale locally at every geometric point $\bar{s}\ra S$, there are commuting charts
         \begin{equation}\label{DomainTarget}\xymatrix{
   \NN ^{m''}\oplus \NN ^{m'} \ar[d] \ar[rr]_{(\Gamma, \id)}   &  & \ar[d]\NN ^{m}\oplus \NN ^{m'}
   &  & \ar[d]\ar[ll]^{(d,0)} \NN ^m \\
                                  N^{C/S} \ar[rr]\ar[d] & & N\ar[d] & & \ar[ll]\ar[d] N^{W/S} \\
                            \overline{N}^{C/S} \ar[rr] & & \overline{N} & & \ar[ll] \overline{N}^{W/S}
                            }\end{equation}
such that the vertical maps induce  isomorphisms between chart
monoids and the characteristics at $\bar{s}$, where $m''=$ the
number of distinguished nodes in $C_{\bar{s}}$; $m'=$ the number
of nondistinguished nodes in $C_{\bar{s}}$; $m=$ the number of
irreducible components of $W_{\bar{s}}^{\mathrm{sing}}$; $\Gamma $
is a \lq generalized diagonal' matrix of size $m\times m''$
\[ \Gamma = \left( \begin{array}{c|c|c|c}
          \Gamma _{1,1} \cdots \Gamma _{1,k_1} & 0 \cdots 0 & \cdots  & 0 \cdots 0 \\  \hline
        0    \cdots 0       & \Gamma _{2,1} \cdots \Gamma _{2,k_2} & 0 \cdots & 0 \cdots  0 \\ \hline
        \vdots  &                   &      \ddots          & \vdots \\ \hline
        0 \cdots 0 &      \cdots         &   \cdots 0    & \Gamma _{m,1} \cdots \Gamma _{m,k_m}
                             \end{array}     \right) ;\]
and $d$ is a diagonal $(d_1,...,d_m)$. For the existence of the
diagram, first note that we have such maps $(\Gamma ,
\mathrm{id})$ and $d$ at the characteristic level due to: the
simplicity of $\overline{N}\leftarrow \overline{N}^{W/S}$; the
simplicity of $f$ at distinguished nodes; and the minimality of the
log prestable curve at nondistinguished nodes. Now to lift such
maps at chart levels, start with a chart in the middle and then
build the rest of the charts.

The minimality at distinguished nodes means that
 there is no nontrivial common divisors of positive integers $\Gamma _{i,k_1},...,\Gamma _{i,k_i}$ for all $i=1,...,m$.
 Let $\{ p_{i,j}\}$ be the set of distinguished nodes on $C_{\bar{s}}$, which are mapped into the $i$-th component
 of $W^{\mathrm{sing}}_{\bar{s}}$.
Since $d_i = l_{i,j}\Gamma _{i,j}$, $d_i$ must be the least common
multiple of $l_{ij}, \forall j$, where
$l_{i,j}=\overline{f^b}_{p_{i,j}}$ comes from the positive
multiplication map
$$\overline{f^b}_{p_{i,j}}: \ZZ\cong \overline{f^*(M_W/\pi ^*N)}_{p_{ij}} \ra
\ZZ \cong \overline{M_C/\pi ^* N}_{p _{ij}}$$
induced by $f^b$.

 \subsubsection{}
   Let a FM type space $W/S$ be equipped with a map $\pi _X: W\ra X$ from $W$ to a scheme $X$.
A log prestable map \eqref{twostars}  is
      called a  {\em log stable map}
       if the stability condition holds, i.e.,
       for each $s\in S$, the group of automorphisms $(h,\varphi )$ is finite
       where:

       \begin{itemize}

       \item  $h$ is an automorphism of $((C,M_C)/(S,N))_{\bar{s}}$
       preserving $n$-labeled points ${\bf p}_{\bar{s}}$.

       \item  $\varphi$ is an automorphism of $((W,M_W)/(S,N))_{\bar{s}})$
       preserving $W_{\bar{s}}\ra X$.

       \item $\varphi \circ f_{\bar{s}} = f_{\bar{s}} \circ h$.

       \end{itemize}

\medskip

We will see in \ref{auto} that the above stability holds if and only if
the underlying automorphism stability holds. That is, $\underline{f}$ is stable
if and only $f$ is stable.

\subsubsection{}\label{LogStableMapstoX} Let $\beta \in A_1(X)/\sim ^{\mathrm{alg}}$.
   A log prestable map \eqref{twostars}
      is called a $(g,n,\beta )$ {\em log stable map to a FM space $W/S$ of a smooth projective $\bf k$ variety $X$}
       if:

\begin{itemize}

    \item  $W/S$ is a FM space of $X$ over $S$.

    \item  Stability Condition holds.

    \item  $(\pi _X \circ f)_*[C_{\bar{s}}]=\beta$ for every $s\in S$.

\end{itemize}

  \medskip



\subsubsection{}\label{LogStableRamifiedMaps} Fix $\mu =(\mu_1,...,\mu _n)$.
A log stable map \eqref{twostars}  to a FM space $W/S$ of $X$
      is called a {\em log stable $\mu$-ramified map}
       if  $\underline{f}$
is stable $\mu$-ramified map over $S$ as in \ref{RamifiedMaps}.

\medskip

When $\mu = (1,...,1)$, we call it a {\em log stable unramified
map}. Note that in this case, indeed,  $f$ is {\em log unramfied}, that is,
   $f^*\Omega _{W/S}^\dagger \ra \Omega _{C/S}^\dagger $ is surjective at every $p\in C$.


 \subsection{Remarks}\label{remarks}

\subsubsection{} Every usual stable map to a smooth projective $\bf k$ variety $X$ from a prestable curve can
be an underlying stable map of a unique log stable map.


\subsubsection{}\label{loc.closed}

Suppose that  a log prestable curve $(C,M_C)/(S,N)$; an extended twisted FM
space $(W,M_W)/(S,N)$; and an admissible map $f :C/S\ra
W/S$ are given. Then there is a unique locally closed subscheme $S'$ of $S$ where
$f$ is a log prestable map over $(S',N_{|_{S'}})$ satisfying the universal
property: If $Z\ra S$ is a scheme morphism such that $f_{|_{Z}} :
(C_{|_Z}, (M_C)_{|_Z}) \ra (W_{|_Z}, (M_W)_{|_Z})$ is a log
prestable map over $(Z, N_{|_Z})$ if and only if $Z\ra S$ uniquely
factors as $Z\ra S'\ra S$. Here, for example, $N_{|_Z}$ is the log structure of the
pullback of $N$ under $Z\ra S$.

First of all the above statement makes sense because of \ref{description} where we have seen that
$f^b$ is determined by $\underline{f}$ and log structures on $C/S$ and $W/S$.
The minimality and the corank = \#
nondistinguished node condition are open conditions. Let $V$ be
the open subscheme of $S$ where these conditions are satisfied.
Then we need to consider only the condition that $f$ becomes a log
morphism. Since $f^b(\log x)=l\log u$ and $f^b(\log y)=l\log v$,
the condition becomes the equality $\log x + \log y = l \log u +
l\log v$ in $N$, which defines a locally closed subscheme $Z$ in
$V$ by the following reason.

\medskip

Let $M$ be a fine log structure on a scheme $Y_2$ and $\varphi$ be
a scheme morphism from a scheme $Y_1$ to $Y_2$. Then
$\overline{\varphi ^*M }= \varphi ^{-1}(\overline{M})$. This together with
$M_{\bar{y}}\cong \overline{M}_{\bar{y}}\oplus \mathcal{O}_{\bar{y}}^\times$, $y\in Y_2$,
implies
that the requirement of the \lq equality' of two sections of $M$
defines a  closed subscheme of an open subscheme in $Y_2$.


\section{Stacks}

In this section, we show that the moduli stacks of log stable maps are
 log algebraic (resp. proper DM) stacks over $\Lambda$
 whenever the corresponding stacks of underlying stable maps
are algebraic (resp. proper DM) over $\Lambda$. Since some stacks
considered here are not separated, we use terminology algebraic
stacks.

\subsection{Log stacks}   Following \cite{F.Kato_LogSmoothCurve},
we introduce log stacks and examples.

\subsubsection{} Let  $(\mathrm{Sch}/\Lambda )$ be the category of
locally noetherian schemes over $\Lambda$. A pair $(\mathcal{F}, L
)$ is called a {\em log stack} if: $\cF$ is a stack over $(\Sch
/\Lambda)$, and $L$ is a functor from $\mathcal{F}$ to the category
$LOG$ of fine log schemes over $\Lambda$, whose morphisms are strict log morphisms, making the diagram
\[{ \xymatrix{ \mathcal{F} \ar[r] \ar[d]_L & (\Sch /\Lambda ) \\
                   LOG \ar[ur]_{\text{forgetful}} &
}}\] commute. Furthermore when
$\mathcal{F}$ is an algebraic stack, it is called a log algebraic
stack. Here a stack $\mathcal{F}$ over $(\mathrm{Sch}/\Lambda )$
is said to be {\em algebraic} if
   the diagonal $\mathcal{F}\ra \mathcal{F}\times _{(\Sch/\Lambda)}\mathcal{F}$
   is representable and of finite presentation (see \S 4 in \cite{DM} for the definition),
   and it allows a smooth cover by a scheme.

   \medskip

  A log scheme $(X,M)$  can be considered as a log stack $(h_X, L_M)$, where
$h_X (Y) = \mathrm{Hom}_{(\Sch /\Lambda ) }(Y,X)$ and $L_M(f:Y\ra
X) = (Y, f^*M)$.

\subsubsection{}   For a fine log scheme $Y$,  $\mathcal{L}_{Y}$ will
   denote the stack of fine log schemes over $Y$:
   The objects are fine log schemes over the log scheme $Y$, and morphisms
   from $Z/Y$ to $Z'/Y$ are strict log morphisms
   $h: Z \ra Z'$ over $Y$. The log stack $\mathcal{L}_{Y}$ was
   denoted by $\mathcal{L}og _Y$ in \cite{Olsson_LogGeom}. To easy notation, we
   use the symbol $\mathcal{L}_{Y}$.

\medskip

For a log stack $(\cF,L)$, $\mathcal{L}_\cF$ is defined as: An
object is a pair $(x \in \cF (X), (X,M) \leftarrow L(x))$, where
$\cF (X)$ is the collection of objects of $\cF$ over $X$, and
the arrow is a (possibly non-strict) log morphism. A morphism from
$(x \in \cF (X), (X,M) \leftarrow L(x))$ to  $(y \in \cF (Y),
(Y,N) \leftarrow L(y))$ is a pair $(x \ra y, (X, M) \ra (Y,N))$,
where the first arrow is a morphism in the stack $\cF$, and the
second arrow is a strict log morphism for which the diagram
\[ \xymatrix{        (X,M) \ar[r] & (Y,N) \\
                             L(x) \ar [r]_{L(x\ra y)}\ar[u] & L(y)\ar[u]
                             }\] commutes.

Note that for a log scheme $(X,M)$, $\mathcal{L}_{(h_X,L_M)} $ is equivalent to $\cL _{(X,M)}$ as log stacks.

\begin{Lemma}
If a log stack $\mathcal{F}$ is algebraic, then $\mathcal{L}_{\mathcal{F}}$ is algebraic.
\end{Lemma}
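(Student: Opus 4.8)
\subsection*{Proof proposal}

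The plan is to reduce the algebraicity of $\cL_\cF$ to the algebraicity of Olsson's stack $\cL_{(X,M)}=\cL og_{(X,M)}$ of fine log schemes over a fine log scheme, established in \cite{Olsson_LogGeom}. There is a tautological forgetful $1$-morphism $p:\cL_\cF\ra \cF$ sending $(x,\,(X,M)\leftarrow L(x))$ to $x$, whose fibre over an object $x\in \cF(X)$ is the groupoid of fine log schemes $(X,M)$ lying over $L(x)$; that is, the fibre is $\cL_{L(x)}$. I will show that $\cL_\cF$ is algebraic by producing a smooth atlas and by controlling its diagonal, both obtained after base change along a smooth atlas of $\cF$.

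Since $\cF$ is algebraic, choose a smooth surjective morphism $U\ra \cF$ with $U$ a scheme, and set $(U,M_U):=L(U\ra \cF)$, a fine log scheme. The central claim is that there is a canonical equivalence
\[ \cL_\cF\times _\cF U \;\simeq\; \cL_{(U,M_U)}. \]
Indeed, an object of the left side over a test scheme $T$ consists of $x\in \cF(T)$ together with the log datum over $L(x)$ and an identification of $x$ with the object $x_U|_T$ pulled back from $U$ along some $T\ra U$. Because the morphisms of $LOG$ are \emph{strict}, the functor $L$ carries the cartesian arrow $x_U|_T\ra x_U$ to a strict arrow, so $L(x)=L(x_U|_T)$ is the pullback of $(U,M_U)$ along $T\ra U$. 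Thus the data amount exactly to a map $T\ra U$ together with a fine log structure on $T$ lying over $M_U|_T$, which is precisely an object of $\cL_{(U,M_U)}(T)$; the assignment is functorial in $T$ and gives an equivalence of fibred categories. By \cite{Olsson_LogGeom}, $\cL_{(U,M_U)}$ is an algebraic stack, locally of finite presentation over $U$, with representable diagonal of finite presentation.

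It remains to assemble these facts. The projection $\cL_\cF\times_\cF U\ra \cL_\cF$ is the base change of the smooth surjection $U\ra \cF$ along $p$, hence is itself smooth and surjective; composing it with a smooth atlas $V\ra \cL_{(U,M_U)}=\cL_\cF\times_\cF U$ by a scheme yields a smooth surjective morphism $V\ra \cL_\cF$ from a scheme, i.e. a smooth atlas. For the diagonal, factor
\[ \Delta:\cL_\cF\ra \cL_\cF\times_{(\Sch/\Lambda)}\cL_\cF \]
as $\cL_\cF\ra \cL_\cF\times_\cF\cL_\cF\ra \cL_\cF\times\cL_\cF$. The second arrow is the base change of $\cF\ra \cF\times\cF$ along $p\times p$, hence representable and of finite presentation since $\cF$ is algebraic. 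The first arrow is the relative diagonal of $p$, whose representability and finite presentation may be checked after the smooth base change $U\ra \cF$, where it becomes the relative diagonal of $\cL_{(U,M_U)}\ra U$, already known to have these properties by Olsson's theorem. Hence $\Delta$ is representable and of finite presentation, and $\cL_\cF$ is algebraic. The main point---and the only place where the log-stack structure is genuinely used---is the equivalence $\cL_\cF\times_\cF U\simeq \cL_{(U,M_U)}$, which rests on the compatibility of $L$ with base change forced by the strictness of morphisms in $LOG$; once this is secured, the remaining verifications are the standard descent arguments for algebraicity recalled above.
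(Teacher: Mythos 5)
Your proof is correct and follows essentially the same route as the paper: pull back along a smooth atlas $U\ra \cF$, identify $\cL _\cF \times _\cF U$ with Olsson's stack $\cL _{(U,M_U)}$ (using strictness of morphisms in $LOG$), and invoke the algebraicity of that stack from \cite{Olsson_LogGeom}. The only divergence is that you re-derive the final descent step by hand (atlas plus factorization of the diagonal, with representability of the relative diagonal checked after smooth base change), whereas the paper packages exactly this step by citing the lemma of \cite{AOV} (Lemma \ref{AlgStack}).
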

\begin{proof} The following argument is due to Olsson.
Take a smooth cover of the algebraic stack $\cF$ by a scheme $Z$. Then we obtain
the diagram
\[\xymatrix {
\cL _\cF  \ar[r] & \cF \\
\cL _\cF \times _{\cF } Z \ar[r] \ar[u]&  Z .\ar[u] }\] Note that
the fibered category $\cL _\cF \times _{\cF } Z $ is equivalent to
$\mathcal{L}_Z$, where $Z$ is endowed with the log structure
induced from the cover. Since $\mathcal{L}_Z$ is an algebraic
stack due to \cite{Olsson_LogGeom}, we can apply Lemma
\ref{AlgStack} to the diagram to conclude that $\cL _\cF$ is an
algebraic stack.
\end{proof}

\begin{Lemma}\label{AlgStack} {\em (\cite{AOV})} Let $\mathcal{F}$ be  a stack. Suppose that there are:
 a morphism from $\mathcal{F}$ to an algebraic stack $\mathcal{G}$
and a smooth surjective map from a scheme $U$ to $\mathcal{G}$, for which $\mathcal{F} \times _{\mathcal{G}}U$ is
algebraic. Then $\mathcal{F}$ is algebraic.
\end{Lemma}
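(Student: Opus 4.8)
The plan is to verify directly the two conditions in the definition of an algebraic stack given above: that the diagonal $\Delta_{\mathcal{F}}\colon\mathcal{F}\ra\mathcal{F}\times\mathcal{F}$ is representable and of finite presentation, and that $\mathcal{F}$ admits a smooth cover by a scheme. Write $g\colon\mathcal{F}\ra\mathcal{G}$ for the given morphism and set $\mathcal{H}:=\mathcal{F}\times_{\mathcal{G}}U$, which is algebraic by hypothesis. First I would record the basic fact that, because $\mathcal{G}$ is algebraic (so $\Delta_{\mathcal{G}}$ is representable), any morphism to $\mathcal{G}$ from a scheme is representable; in particular $U\ra\mathcal{G}$ is representable, and being smooth and surjective, these properties pass to its base change $p\colon\mathcal{H}\ra\mathcal{F}$. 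Thus $p$ is representable, smooth, and surjective.

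For the smooth cover, since $\mathcal{H}$ is algebraic it carries a smooth surjective atlas $V\ra\mathcal{H}$ from a scheme $V$. As $V\ra\mathcal{H}$ is representable and $p$ is representable, smooth, and surjective, the composite $V\ra\mathcal{F}$ is a representable, smooth, surjective morphism from a scheme, i.e.\ a smooth cover of $\mathcal{F}$. This disposes of the second condition. For the diagonal, I would use the standard factorization
\[
\Delta_{\mathcal{F}}\colon\ \mathcal{F}\xrightarrow{\ \Delta_{\mathcal{F}/\mathcal{G}}\ }\mathcal{F}\times_{\mathcal{G}}\mathcal{F}\xrightarrow{\ j\ }\mathcal{F}\times\mathcal{F}.
\]
The map $j$ fits into a Cartesian square with $\Delta_{\mathcal{G}}$ along $\mathcal{F}\times\mathcal{F}\ra\mathcal{G}\times\mathcal{G}$, so it is a base change of $\Delta_{\mathcal{G}}$ and hence representable and of finite presentation since $\mathcal{G}$ is algebraic. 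It then remains only to control the relative diagonal $\Delta_{\mathcal{F}/\mathcal{G}}$.

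Here is the heart of the argument. Base changing $\Delta_{\mathcal{F}/\mathcal{G}}$ along the smooth surjective $U\ra\mathcal{G}$ and using the identifications $\mathcal{F}\times_{\mathcal{G}}U=\mathcal{H}$ and $(\mathcal{F}\times_{\mathcal{G}}\mathcal{F})\times_{\mathcal{G}}U=\mathcal{H}\times_{U}\mathcal{H}$, one sees that the base change of $\Delta_{\mathcal{F}/\mathcal{G}}$ is exactly the relative diagonal $\Delta_{\mathcal{H}/U}\colon\mathcal{H}\ra\mathcal{H}\times_{U}\mathcal{H}$. In turn $\Delta_{\mathcal{H}/U}$ sits in a Cartesian square over $\Delta_{\mathcal{H}}$ along $\mathcal{H}\times_{U}\mathcal{H}\ra\mathcal{H}\times\mathcal{H}$, so it is a base change of the absolute diagonal $\Delta_{\mathcal{H}}$, which is representable and of finite presentation because $\mathcal{H}$ is algebraic. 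The main obstacle, and the only nonformal point, is to conclude from this that $\Delta_{\mathcal{F}/\mathcal{G}}$ itself is representable and of finite presentation: this rests on the descent principle that both \emph{representability by algebraic spaces} and \emph{local finite presentation} are properties of morphisms of stacks that can be checked after a smooth (indeed fppf) surjective base change on the target, here $U\ra\mathcal{G}$. Granting this descent, $\Delta_{\mathcal{F}/\mathcal{G}}$ is representable and of finite presentation, hence so is the composite $\Delta_{\mathcal{F}}=j\circ\Delta_{\mathcal{F}/\mathcal{G}}$. Combined with the smooth cover $V\ra\mathcal{F}$ constructed above, this shows that $\mathcal{F}$ is algebraic, completing the proof.
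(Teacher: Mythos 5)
Your proof is correct, but there is nothing in the paper to compare it against: the paper states this lemma as a quotation from \cite{AOV} and uses it as a black box (in showing that $\mathcal{L}_{\mathcal{F}}$ is algebraic), giving no proof of its own. What you have written is essentially the standard descent/bootstrap argument that underlies the cited result, and it is sound: the atlas $V\ra \mathcal{H}\ra\mathcal{F}$ obtained by composing representable smooth surjections, and the factorization $\Delta_{\mathcal{F}}=j\circ\Delta_{\mathcal{F}/\mathcal{G}}$ with $j$ a base change of $\Delta_{\mathcal{G}}$, are exactly right. Two points deserve emphasis. First, your claim that $\Delta_{\mathcal{H}/U}$ is a base change of the absolute diagonal $\Delta_{\mathcal{H}}$ is valid precisely because $U$ is a scheme: for a morphism of stacks $\mathcal{X}\ra\mathcal{Y}$, the square comparing $\Delta_{\mathcal{X}/\mathcal{Y}}$ with $\Delta_{\mathcal{X}}$ is Cartesian only when the pulled-back inertia of $\mathcal{Y}$ is trivial, which holds here since $U(T)$ is a set rather than a groupoid; with a stacky $U$ this step would fail, so it is worth saying explicitly. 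Second, the descent principle you identify as the ``only nonformal point'' is indeed the real content: it is the bootstrap theorem (triviality of inertia descends because inertia is a sheaf, and a sheaf that is fppf-locally an algebraic space is an algebraic space), and it yields representability of the diagonal by \emph{algebraic spaces}. That matches the convention of \cite{LM-B} and \cite{AOV}; since the paper's definition of algebraicity nominally points to \S 4 of \cite{DM}, where representable means representable by schemes, one should either read the definition in the algebraic-space sense or note the discrepancy. With those caveats made explicit, your argument is a complete and correct proof of the lemma the paper only cites.
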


\subsection{Stacks of log prestable curves and extended log twisting FM type spaces}

\subsubsection{} As usual, we can define the category $\mathfrak{M}_{g}^{\mathrm{log}} $
of genus $g$, log prestable curves
over $(\mathrm{Sch}/\Lambda )$. Then using the inverse image of
log structures, we see that the category is fibered in groupoids
over $(\mathrm{Sch}/\Lambda )$. Furthermore, using the gluing of
sheaves, it is a stack. In fact, the stack $\fM _g^{\mathrm{log}}$
is equivalent to $\mathcal{L}_{\fM _g}$. Hence it is a log
algebraic stack.

Let  $\mathfrak{M}_{g, n}^{\mathrm{log}}$ be the algebraic stack
$\mathfrak{M}_{g}^{\mathrm{log}}\times _{\mathfrak{M}_g}\mathfrak{M}_{g,n}$ so that
there is no interesting log structures on markings.

\subsubsection{}\label{cB}

We consider a stack $\cB$ of certain $(W/S, W\ra X)$, where $W/S$
are log FM type spaces, and $W\ra X$ are maps from $W$ to a fixed
$X$. A morphism between them is a cartesian diagram preserving
$X$.
 By definition in \ref{FMtypespaces}, any object $W/S$ in $\cB$ can be realized as
 an underlying space of an extended log twisted FM type
 space. Suppose that $\cB$ is an algebraic stack.

Since $\cB$ is a log stack by the canonical log structures,
we can consider $\cL _{\cB}$. Let $\cB
^{\mathrm{tw}}$ (resp. $\cB ^{\mathrm{etw}}$) be the full substack
of $\cL _{\cB} $ whose objects are   (resp. extended) log twisted
FM type spaces whose underlying spaces are in $\cB$. By Lemma
\ref{chart}, they are open substacks of the algebraic stack $\cL
_{\cB}$, and hence they are also algebraic stacks.

Assume that there is a smooth scheme $B$ over $\Lambda$ and a
smooth morphism $B\ra \cB$,  defined by a \lq universal'
family $U/B$. Then,     $\cB ^{\mathrm{tw}}$ and $\cB
^{\mathrm{etw}}$ are smooth over $\Lambda$. Indeed, we can
formulate a smooth versal space as following. Let $W/k(\bar{p})$
be the pullback of $U$ at a point $p\in B$. Then, a formal versal
space of $\cB ^{\mathrm{etw}}$ at $W/k(\bar{p})$ with an  extended
log twisting (\ref{extendedLogTwisting})
  is
 \[ \mathrm{Spf} \hat{\mathcal{O}}_{\bar{p}} [[ x_1,...,x_m, y_1,...,y_{m'}]] / (x_1^{d_1} - \tau _1,...,
x_m^{d_m} - \tau _m), \] where $x_i, y_j$ are indeterminates;
\[ (d_1,...,d_m,\underbrace{0,...,0}_{m'}): \overline{N}_{\bar{p}}^{U/B} \ra \overline{N}_{\bar{p}};\]
$\hat{\mathcal{O}}_{\bar{p}}\ni \tau _i$ are $\alpha ^{U/B}(e_i)$
for a chart $\NN ^m  \ra N^{U/B}$ at $\bar{p}$; and $m=$ the number of
$\mathrm{Irr}U^{\mathrm{sing}}_{\bar{p}}$.

\subsubsection{}

Note that $\mathfrak{M}_{g,n}^{\mathrm{log}}\times _{LOG} \cB
^{\mathrm{etw}}$ is equivalent to the stack whose objects are
pairs of log prestable curves $(C,M_C)/(S,N)$ and extended log
twisting FM type spaces $(W, M_W)/(S, N)$. Since
$\mathfrak{M}_{g,n}^{\mathrm{log}}\times _{LOG} \cB
^{\mathrm{etw}}$
 is a fiber product of algebraic stacks over an algebraic stack, it is algebraic.
It is formally smooth over $\Lambda$, because the projection
$\mathfrak{M}_{g,n}^{\mathrm{log}}\times _{LOG} \cB
^{\mathrm{etw}} \ra \cB ^{\mathrm{etw}}$ is smooth by Proposition
3.14 in  \cite{K.Kato}, and $\cB ^{\mathrm{etw}}$ is formally
smooth over $\Lambda$.

\subsubsection{}\label{auto}

Consider a log twisting (\ref{LogTwisting}); assume that the
charts are global charts over $S$. Then the automorphism functor
$$\mathrm{Aut}_{W/S}( W/S, N\stackrel{d}{\leftarrow} N^{W/S})$$
over $W/S$, i.e., fixing $W/S$, is representable by \[
\mathbf{Spec}\mathcal{O}_S [ z_1^{\pm 1},...,z_m^{\pm 1}] / (z_i
^{d_i} - 1, z_i \alpha (e_i) - \alpha (e_i))\] over $S$, where
$\alpha :N\ra \co _S$ is the structure map of the log structure.
Similarly,
$$\mathrm{Aut}_{(C/S,W/S)} (C/S, W/S, N^{C/S}\stackrel{(\Gamma ,
\id )}{\ra}N\stackrel{(d,0)}{\leftarrow} N^{W/S}),$$ fixing all
underlying scheme structures, is representable by \[
\mathbf{Spec}\mathcal{O}_S [ z_1^{\pm 1},...,z_m^{\pm 1}] / (z_i
^{G_i} - 1, z_i \alpha (e_i) - \alpha (e_i)) \] over $S$, where
log twistings are given as in (\ref{DomainTarget}) with the global
charts over $S$, and $G_i=\mathrm{GCD}(\Gamma _{i,j}, \forall j)$.

\subsection{The stack of log stable maps}\label{StackLogStableMaps}


Denote by $\mathcal{U}$ be the stack of all pairs $(W/S, \sigma )$, where $W/S$ is an object in $\mathcal{B}$
and $\sigma$ is a section of $W\ra S$. The arrows are morphisms in $\mathcal{B}$ preserving sections.
Define the category
$\overline{M}_{g,n}^{\mathrm{log}}({\mathcal{U/B}})$
of
$(g,n)$ log stable maps to FM type spaces in the stack $\cB$. A
morphism from $((f: (C',M_{C'}, {\bf p}' )\ra
(W',M_{W'}))/(S',N_{S'}))$ to $((f:  (C,M_C, {\bf p}) \ra
(W,M_W))/(S,N_S)))$ is a commutative diagram
\[\xymatrix{
     (C', M_{C'}, {\bf p}' )  \ar[rr]_(.6){f'}\ar[dr]  \ar[dd] &            &  (W',M_{W'})   \ar[dd] \ar[ld] \ar[r]& X \ar[dd]^{=} \\
                                       & (S',N_{S'})  \ar[dd] &                       &  \\
    (C,M_C, {\bf p})  \ar'[r][rr]_(.3){f} \ar[dr]  &           &  (W,M_W)   \ar[ld]  \ar[r]    & X   \\
                                       &      (S,N_S)    &                       &
}\] where two side squares with the edge $(S', N_{S'}) \ra (S,
N_S)$ are fiber products in log sense preserving labeled
points, and the log structure $N_{S'}$ of $S'$ is naturally isomorphic to the pullback of
the log structure $N_S$ of $S$. Then it is a log stack over
$(\mathrm{Sch}/\Lambda )$; there is a natural
map
\[     \overline{M}_{g,n}^{\mathrm{log}}({\mathcal{U/B}}) \ra \mathfrak{M}_{g,n}^{\mathrm{log}}\times _{LOG} \cB ^{\mathrm{etw}}. \]

Similarly define the stack $\overline{M}_{g,n}(\mathcal{U/B})$ by
taking off all log structures. There is a natural map
\[    \overline{M}_{g,n}({\mathcal{U/B}}) \ra \mathfrak{M}_{g,n}\times _{(\Sch /\Lambda)} \cB .\]

Note that since $f^b$ is uniquely determined by $\underline{f}$
and log structures on its source and target, as seen in
\ref{description},
$\overline{M}_{g,n}^{\mathrm{log}}({\mathcal{U/B}})$ is a full
substack of
$$\overline{M}_{g,n}(\mathcal{U/B})
\times _{(\fM _{g,n}\times \cB)} (\fM _{g,n}^{\mathrm{log}} \times
_{LOG}\cB ^{\mathrm{etw}}).$$

In what follows, a DM stack over $\Lambda$ means an algebraic stack
over $\Lambda$ for which
   the diagonal $\mathcal{F}\ra \mathcal{F}\times _{(\Sch/\Lambda)}\mathcal{F}$
   is separated and unramified.

\begin{Thm}\label{Thm1}

If $\overline{M}_{g,n}(\mathcal{U/B})$ is an algebraic stack
(resp. a proper DM stack) over $\Lambda$, then so is
$\overline{M}_{g,n}^{\mathrm{log}}(\mathcal{U/B})$.

\end{Thm}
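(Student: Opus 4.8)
The plan is to prove the two assertions---algebraicity and properness as a DM stack---by exploiting the forgetful map to the space of underlying stable maps together with the structural results already established for log prestable curves and extended log twisted FM type spaces. First I would set up the commutative square relating the log moduli stack to its underlying counterpart. By the remark at the end of subsection \ref{StackLogStableMaps}, the stack $\overline{M}_{g,n}^{\mathrm{log}}(\mathcal{U/B})$ is identified with a full substack of the fiber product
\[ \overline{M}_{g,n}(\mathcal{U/B}) \times _{(\fM _{g,n}\times \cB)} (\fM _{g,n}^{\mathrm{log}} \times _{LOG}\cB ^{\mathrm{etw}}). \]
Since $\fM _{g,n}^{\mathrm{log}}$ is a log algebraic stack (equivalent to $\cL_{\fM_g}$ base-changed to $\fM_{g,n}$), and $\cB ^{\mathrm{etw}}$ is algebraic as an open substack of $\cL_{\cB}$ by Lemma \ref{chart}, the fiber product above is algebraic whenever $\overline{M}_{g,n}(\mathcal{U/B})$ is. So for the algebraicity statement I would only need to verify that the full substack cut out by the log-prestable-map conditions is locally closed (hence itself algebraic). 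This is exactly the content of \ref{loc.closed}: given an admissible underlying map together with the two log structures, the minimality and corank conditions are open, and the requirement that $f$ be a genuine log morphism (the equality $\log x + \log y = l\log u + l\log v$ in $N$) defines a closed subscheme of an open subscheme. Thus the log stable map locus is locally closed in the fiber product, establishing algebraicity.

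Next I would address the DM and properness claims under the hypothesis that $\overline{M}_{g,n}(\mathcal{U/B})$ is a proper DM stack. For the DM (unramified, separated diagonal) property, the key point is that the log data attached to a fixed underlying stable map form a rigid, essentially discrete structure: by \ref{description} the homomorphism $f^b$ is completely determined by $\underline{f}$ together with the minimal log structures on source and target, and by \ref{auto} the relevant automorphism groups are cut out by equations of the form $z_i^{G_i}-1$ with $G_i = \mathrm{GCD}(\Gamma_{i,j})$. The minimality in strong sense forces these $G_i$ to be finite group schemes of roots of unity, so the automorphism stability coincides with the underlying automorphism stability---the assertion flagged in \ref{auto} that $\underline f$ is stable iff $f$ is stable. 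Combined with the locally closed immersion into a DM fiber product, this shows the log stack has unramified, separated diagonal, hence is DM.

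The main obstacle, and the step requiring the most care, is properness---specifically the valuative criterion and its \emph{separatedness} half. I would use the valuative criterion over a complete DVR. Given a log stable map over the generic point of the DVR, properness of $\overline{M}_{g,n}(\mathcal{U/B})$ yields a unique extension of the underlying stable map over the whole DVR (after possibly a finite base change, absorbed by the DM structure). The problem then reduces to extending the log data uniquely. Here the minimality condition on the domain curve and the simple/extended-twisted structure on the target are precisely what guarantee uniqueness: without minimality one could rescale the free monoid $N$ and break separatedness. I would argue that, because $N$ is locally free and minimal with respect to $N^{C/S}$, the integers $\Gamma_{i,j}$ and $d_i$ appearing in \ref{description} are forced by $\underline{f}$ (the $l_{i,j}$ being the ramification multiplicities along distinguished nodes, with $d_i$ the least common multiple and no common divisor among the $\Gamma_{i,j}$), so no choices remain. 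The uniqueness established in Corollary \ref{LoguLogv}, via Lemma \ref{SpecialLemma}, is the technical engine here: it rigidifies the pair $(l\log u, l\log v)$ and hence the log morphism over the special fiber. Assembling these, the extension of the log structure exists and is unique, giving both the existence and separatedness parts of the valuative criterion; combined with finiteness of automorphisms this yields properness of $\overline{M}_{g,n}^{\mathrm{log}}(\mathcal{U/B})$ over $\Lambda$.
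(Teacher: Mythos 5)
Your overall architecture coincides with the paper's: algebraicity via the identification of $\overline{M}_{g,n}^{\mathrm{log}}(\mathcal{U/B})$ with a full substack of $\overline{M}_{g,n}(\mathcal{U/B}) \times _{(\fM _{g,n}\times \cB)} (\fM _{g,n}^{\mathrm{log}} \times _{LOG}\cB ^{\mathrm{etw}})$ together with the locally closed condition of Remark \ref{loc.closed}, the unramified diagonal from finiteness of automorphisms, and properness via the valuative criterion, reduced (using properness of the underlying stack) to a unique extension of the log data over a DVR. The algebraicity and DM parts of your argument are sound and essentially identical to the paper's.

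The gap is in the properness step, and it is exactly the content of the paper's Lemma \ref{proper}. You argue that minimality forces the numerical data $(\Gamma_{i,j}, d_i)$ of \ref{description} and that Corollary \ref{LoguLogv} and Lemma \ref{SpecialLemma} rigidify the elements $l\log u$, $l\log v$, and from this you conclude that ``the extension of the log structure exists and is unique.'' But those results only give \emph{uniqueness}: if an extension exists, its characteristic data, and then the extension itself, are pinned down. They do not produce one. Existence is a genuine construction, because the special fiber typically has distinguished nodes and singular components of the target that are invisible at the generic point (the index sets $I_2, I_2', I_2''$ in the paper's proof), so the minimal locally free log structure $N$ over $\Spec R$ cannot be obtained by spreading out $N_\xi$; it must contain new generators $e_j$, and compatibility with $N^{W/S}\ra N$ over the structure maps forces $\alpha_N(e_j)$ to be (a unit times) a $d_j^{(2)}$-th root of the smoothing parameter $\tau_j=\alpha^{W/S}(e_j)$, where $d_j^{(2)}=\mathrm{LCM}(l_i,\ \forall\, i\mapsto j)$. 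One must check that such roots exist in $R$ --- this is where admissibility enters: $\tau_j=t_i^{l_i}$ forces each $l_i$, hence their least common multiple, to divide the valuation of $\tau_j$, and the unit part admits roots over the algebraically closed residue field --- and one must also check that different choices of roots yield canonically isomorphic log structures (this is where the rigidity of Lemma \ref{SpecialLemma} actually does its work in the paper). Without this construction, the valuative criterion argument only establishes the separatedness half, not the existence half, of properness.
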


\begin{proof} Since $\overline{M}_{g,n}(\mathcal{U/B})$ is a full substack of
an algebraic stack, the isomorphism functors are representable and of finite presentation.
Now, the algebraic stack part of the statement follows from Remark
\ref{loc.closed}.
        The properness follows from that of  $\overline{M}_{g,n}(\mathcal{U/B})$
         and Lemma \ref{proper} since $\overline{M}_{g,n}^{\mathrm{log}}(\mathcal{U/B})$ is
         of finite presentation over $\Lambda$.  Due to the finiteness of automorphisms,  the diagonal is
         unramified. \end{proof}

\begin{Lemma}\label{proper} Let $R$ be a DVR
with an algebraically closed residue field; let $(W/S, W\ra X)$ be
a log FM type space, where $S=\Spec R$; and let $C/S$ be a
prestable curve over $S$. Suppose that a log stable map $f_\xi :
(C_\xi,M_{C_\xi }) \ra (W_\xi ,M_{W_\xi })$ over $(\xi, N_\xi)$ is
given, where $\xi$ is the generic point of $S$. Assume that there
is a stable admissible map $\underline{f}:C\ra W$ over $S$,
extending
 the underlying map $\underline{f_\xi }$.
Then  there exists a unique pair of a minimal log prestable curve $(C, M_C)/(S,N)$ and
an extended log twisted FM type space $(W, M_W)/(S,N)$, extending $(C_\xi, M_{C_{\xi}})/(\xi , N_{\xi})$ and
$(W_\xi, M_{W_{\xi}})/(\xi , N_{\xi})$, respectively,
such that  the stable admissible map  $\underline{f}$ becomes a log morphism over
$(S,N)$.
\end{Lemma}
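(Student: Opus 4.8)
The plan is to show that the underlying admissible map $\underline{f}$ over the trait $S=\Spec R$ already rigidly determines all the log-theoretic data, and then to verify that this data assembles into the desired log morphism. Since $S$ is local with algebraically closed residue field, every locally free log structure on $S$ is given by a single chart $\NN^r\ra R$, so I can argue entirely with explicit monoids. First I would produce the two canonical log structures: let $N^{C/S}$ be the canonical log structure of the prestable curve $C/S$ and $N^{W/S}$ the special log structure of the log FM type space $W/S$; these exist and are unique by the results of F. Kato and Olsson recalled in Sections \ref{LogGeometry}--\ref{LogStableMaps}. Over the closed point $\overline{N}^{C/S}_{0}=\NN^{m''+m'}$ (one generator per node, $m''$ distinguished and $m'$ nondistinguished) and $\overline{N}^{W/S}_{0}=\NN^{m}$ (one generator $\tau_i$ per irreducible component of $W^{\sing}_{0}$), with charts sending a node generator to its smoothing parameter $t$ (where $uv=t$) and a component generator to its deformation parameter $\tau_i$ (where $xy=\tau_i$).

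Next I would extract the combinatorial data from admissibility. At a distinguished node $p_{i,j}$ lying over the $i$-th singular component, admissibility gives $x=c_1u^{l_{i,j}}$, $y=c_2v^{l_{i,j}}$ with $c_1c_2\in A^\times$, hence $\tau_i=(\mathrm{unit})\cdot t_{i,j}^{\,l_{i,j}}$ in $R$. Passing to orders with respect to the uniformizer, $e_i:=\ord\tau_i=l_{i,j}\cdot\ord(t_{i,j})$ is independent of $j$, and setting $d_i:=\mathrm{lcm}_j(l_{i,j})$ and $\Gamma_{i,j}:=d_i/l_{i,j}$ one gets $\gcd_j\Gamma_{i,j}=1$ and, crucially, $d_i\mid e_i$. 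I would then define $N$ by the chart $\NN^{m+m'}\ra R$ sending the $i$-th of the first $m$ generators $s_i$ to a $d_i$-th root $t^{e_i/d_i}$ of $\tau_i$ and the $j$-th of the remaining $m'$ generators to the smoothing parameter of the $j$-th nondistinguished node, with the maps $N^{W/S}\ra N$ given by $(d,0)$ and $N^{C/S}\ra N$ by $(\Gamma,\id)$ exactly as in diagram (\ref{DomainTarget}).

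Finally I would verify the four points that make this work. (i) Minimality in the strong sense holds because $\gcd_j\Gamma_{i,j}=1$, and the rank of $\mathrm{Coker}(N^{W/S}\ra N)$ is $m'$, the number of nondistinguished nodes, as required. (ii) The map $N^{W/S}\ra N$ is extended simple, so $(W/S, N^{W/S}\ra N)$ is an extended log twisted FM type space. (iii) The map $\underline{f}$ upgrades to a log morphism: at each distinguished node I set $f^b(\log x)=l_{i,j}\log u$ and $f^b(\log y)=l_{i,j}\log v$, which are the unique elements provided by Corollary \ref{LoguLogv}, while on the smooth locus and at nondistinguished nodes $f^b$ is the evident strict identification; the uniqueness in Corollary \ref{LoguLogv} guarantees that these local recipes glue to a well-defined $f^b$ over the log structure maps. (iv) Restriction to $\xi$ recovers $(W_\xi,M_{W_\xi})/(\xi,N_\xi)$ and $f_\xi$: the same minimality and admissibility pin down the generic structure uniquely, so the constructed object extends the given one, and uniqueness of the whole package follows identically, the characteristic being forced by minimality and the corank condition, the structure map being forced up to units by $\alpha(s_i)^{d_i}=\tau_i$ and the node equations, and the residual unit ambiguity being removed by Lemma \ref{SpecialLemma} together with Corollary \ref{LoguLogv}.

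I expect the main obstacle to be precisely the divisibility $d_i\mid e_i$ in the second paragraph: everything else is bookkeeping dictated by the diagram (\ref{DomainTarget}), but this divisibility is exactly what allows the $d_i$-th root of $\tau_i$ to be taken \emph{inside} $R$ rather than after a ramified base change. It is established by observing that each $l_{i,j}$ divides $e_i$ (since $e_i=l_{i,j}\ord(t_{i,j})$) and $d_i=\mathrm{lcm}_j(l_{i,j})$, so that $d_i\mid e_i$; this is the point where the admissibility hypothesis is genuinely used and where the statement would fail without it.
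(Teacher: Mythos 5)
There is a genuine gap, in fact two, and both sit exactly where you placed your confidence.

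First, the central construction step fails as written. You define $\alpha(s_i)=t^{e_i/d_i}$ (a power of the uniformizer) and call it ``a $d_i$-th root of $\tau_i$'', but $(t^{e_i/d_i})^{d_i}=t^{e_i}$ differs from $\tau_i$ by the unit $\tau_i/t^{e_i}$, which need not be a $d_i$-th power in $R$: the lemma does not assume $R$ henselian, so units of $R$ need not admit roots. This is not cosmetic. If the maps $N^{W/S}\ra N$ and $N^{C/S}\ra N$ are literally the chart maps $(d,0)$ and $(\Gamma,\id)$ of diagram (\ref{DomainTarget}), then commutation over the log structure maps forces the exact identities $\alpha(s_i)^{d_i}=\tau_i$ and $\alpha(s_i)^{\Gamma_{i,j}}=t_{i,j}$, neither of which your choice satisfies; so your arrows are not morphisms of log structures, and the identity $\log x+\log y=l\log u+l\log v$ in $N$, which by Remark \ref{loc.closed} is precisely what makes $\underline{f}$ a log morphism, cannot even be tested. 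Moreover the divisibility $d_i\mid e_i$, which you single out as ``exactly what allows the root to be taken inside $R$'', is necessary but not sufficient: what actually produces an exact root is the exact relation $\tau_i=t_{i,j}^{\,l_{i,j}}$ (available after normalizing special coordinates as in the remark following \ref{adm} and Lemma \ref{SpecialLemma}) together with $\gcd_j\Gamma_{i,j}=1$ --- e.g.\ $s:=\prod_j t_{i,j}^{m_j}$ with $\sum_j m_j\Gamma_{i,j}=1$ satisfies $s^{d_i}=\tau_i$ and $\ord(s)=e_i/d_i\ge 0$, so $s\in R$. This is the route the paper takes (``we choose a choice of the roots''). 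Alternatively one can keep your $\alpha(s_i)$ and insert unit corrections $\rho_{i,j}$, $w_i$ into the two maps, but then the compatibility $w_i=\rho_{i,j}^{\,l_{i,j}}$ at every distinguished node must be verified (it again comes from $\tau_i=t_{i,j}^{\,l_{i,j}}$); you do neither.

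Second, and more structurally, your construction never uses the given generic log data $(N_\xi, N^{C_\xi/\xi}\ra N_\xi\leftarrow N^{W_\xi/\xi})$, yet the lemma demands an extension of it. When $W_\xi$ already has singular components or $C_\xi$ has nodes --- which the hypotheses explicitly allow --- the corresponding parameters satisfy $\tau_i=0$ and $t_{i,j}=0$, so $\ord\tau_i$ is infinite and your recipe $\alpha(s_i)=t^{e_i/d_i}$ is undefined. For precisely these persisting components and nodes, the claim in your step (iv) that ``minimality and admissibility pin down the generic structure uniquely'' is false: by Remark 2 in \ref{DMstack}, a fixed underlying admissible map over a point generally admits \emph{several} non-isomorphic log prestable map structures (the count $|(\Pi_i\ZZ/l_i\ZZ)/(\ZZ/d\ZZ)|$ there can exceed $1$), so an object built from $\underline{f}$ alone need not restrict to the given one over $\xi$. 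This is why the paper's proof splits the index sets into $I_1,I_1',I_1''$ (data already present at $\bar\xi$, where it takes the charts of the \emph{given} generic structure via \ref{description} and sets $\alpha(e_j)=0$) versus $I_2,I_2',I_2''$ (new nodes and components at the closed point, where the lcm/root construction applies), and why its uniqueness argument can reduce to the choice of roots only. Your argument needs the same bifurcation to be a proof of the stated lemma rather than of a weaker statement about maps with smooth generic fiber.
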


\[\xymatrix{ (C_\xi, M_{C_\xi}) \ar[rr]_{f_{\xi}} \ar[dr] & & (W_\xi , M_{W_\xi}) \ar[dl]     & C
                     \ar[rr]_{\underline{f}}\ar[dr]  & & W \ar[dl]\\
                     & (\xi, N_{\xi})    &    &  & S &
                     }\]

\begin{proof}
Let $p$ be the closed point of $S$. We use the following index sets:

\[ \begin{array}{ccl} I_1 \text{ (resp. $I_1\coprod I_2$) } & = & \text{ an index set for the components } \\ & & \text{
of the singular locus of $W$ at $\bar{\xi}$ (resp. $p$). } \\
 I_1' \text{ (resp. $I_1'\coprod I_2'$) } &=&  \text{ an index set for the nondistinguished nodes} \\ & & \text{ of $C$ at $\bar{\xi}$ (resp. $p$).} \\
 I_1'' \text{ (resp. $I_1''\coprod I_2''$) } &=&  \text{ an index set for the distinguished nodes} \\ & & \text{ of $C$ at $\bar{\xi}$ (resp. $p$). }
 \end{array} \]

 Then by \ref{description} we may assume that the following compatible charts are given:
 \[ \begin{CD} N_{\bar{\xi}}^{C/S} @>>> N_{\bar{\xi}} @<<< N_{\bar{\xi}} ^{W/S} \\
      @AAA    @AAA @AAA \\
      \NN ^{I_1'' + I_1'} @>>{(\Gamma, \mathrm{id})}> \NN ^{I_1+I_2'} @<<{(d^{(1)},0)}< \NN ^{I_1} , \end{CD}\]
      where $d^{(1)}$ is a monoid homomorphism from $\NN ^{I_1''}$ to $\NN
      ^{I_1}$, and the sums of index sets is used for the disjoint
      unions of them, for simplicity.

For $i \in I_2''$, let $(u_i, v_i)$ be a special coordinate pair
at the node $p_i$ corresponding to $i$, and let $(x_i,y_i)$ be the
part of coordinates at $f(p_i)$ as in the admissible condition
so that $x_i=u_i^{l_i}$ and $y_i=v_i^{l_i}$ under $f$. For $j\in
I_2$, define $d^{(2)}_j = \mathrm{LCM}(l_i , \ \forall\,  i \mapsto j) $,
where $i\mapsto j$ means that $f(p_i)$ is in the component of
$W_{p}^{\mathrm{sing}}$ corresponding to $j\in I_2$.

  First define
a prelog structure \[ \alpha : P:= \NN ^{I_1+I_1'+I_2 + I'_2} \ra \mathcal{O}_S, \]
 where $\alpha$ is determined by \[ \alpha (e_j) = \begin{cases}   0 & j \in I_1 + I_1' \\
 (\alpha ^{W/S}  (e_j))^{1/d_j^{(2)}} & j\in I_2 \\
  \alpha ^{C/S}(e_j) & j\in I_2'
   \end{cases}
  \] where $\alpha ^{W/S}$ (resp. $\alpha ^{C/S}$) is
  the structure map from $N^{W/S}$ (resp. $N^{C/S}$) to $\mathcal{O}_S$,
  and  $(\alpha ^{W/S}  (e_j))^{1/d_j^{(2)}}$
  is a root (we choose a choice of the roots) whose $d_j^{(2)}$-th power is $\alpha ^{W/S}(e_j)$.
  Set $N=P^a$, the log structure associated to $P$.
  Now we construct a log prestable curve structure on $C/S$
  and an extended log twisting on $W/S$.
  It amounts to establishing
  certain log morphisms $N^{C/S}\ra N \leftarrow N^{W/S}$.
  We define them by homomorphisms between their charts as in
  diagram
  \[ \xymatrix { N^{C/S} \ar[rr] & & N & & \ar[ll] N^{W/S} \\
     \NN ^{I_1''+I_2''+I_1'+I_2'} \ar[u]\ar[rr]_{(\Gamma _1,\Gamma _2, \id, \id )}
   &  & \NN ^{I_1+I_2+I_1'+I_2'} \ar[u] & & \NN ^{I_1+I_2} \ar[u] \ar[ll]^{\ \  \ \ (d^{(1)}, d^{(2)},0,0)}
                   }\]
 where $\Gamma _2 (e_i) = \frac{d_j^{(2)}}{l_i} e_j$ for $i\in I_2''$, $i\mapsto j$,
 and $d^{(2)}=(d_j ^{(2)})_{j \in I_2}$.
The existence part of Lemma is verified.

 When $N$ is defined, the ambiguity occurs only
 in the choices of roots $\alpha ^{W/S}(e_j)^{1/d_j^{(2)}}$. Let $N'$ be a constructed one, using
 another choice. Then there is a unique isomorphism $N\ra N'$ commuting with maps from
 $N^{C/S}$ and $N^{W/S}$: The isomorphism is determined by
 \[\begin{array}{ccc} N & \ra & N' \\
           e_j & \mapsto & e_j + \alpha (e_j)/\alpha ' (e_j),  \end{array} \]
 where $\alpha '$ is the log structure map of $N'$, and $j\in I_2$ so that
 $ \alpha (e_j)/\alpha ' (e_j)\in \mathcal{O}_S^\times   $.
\end{proof}

\subsubsection{Remarks}\label{DMstack}  1. Assume that
 the stability condition of admissible maps to $U/B$, with respect to $\pi _X$
 (i.e., fixing $X$), is an open condition. Then we claim that
$\overline{M}_{g,n}(\mathcal{U/B})$ is a DM stack over $\Lambda$
by the following argument. Consider the stack
$\overline{M}_{g,n}(U/B; X)$ of admissible maps to the rigid
target $U/B$, {\em stable with respect to $\pi _X$}, which is a
full substack of the Kontsevich moduli space
$\overline{M}_{g,n}(U)$ of $(g,n)$  stable maps to $U$. Then the
stack is a DM stack over $\Lambda$ by Theorem 2.11 in \cite{Li1}.
This in turn shows that $\overline{M}_{g,n}(\mathcal{U/B})$ is a
DM  stack over $\Lambda$. Here the quasi-separatedness can be
directly shown.

\medskip

 2. The explicit description in \ref{description}
  shows that when $S$ is a geometric point $\Spec \mathbf{k}$,
  then for an underlying admissible map $\underline{f}$,
  the number of isomorphism classes of log prestable maps realizing $\underline{f}$
  is finite. The numerical data $l, d, \Gamma$ are uniquely determined. What remain
  indetermined are maps from $N^{W/S}$ and $N^{C/S}$ to $N$. The choices, however,
  are finite since $f$ is a log morphism. We illustrate the reason by an example, for simplicity.
  Suppose that the ranks of $N^{W/S}$ and $N$ are 1 so that we may write a map
  from $N^{W/S} = \NN \oplus {\bf k}^\times $ to $N=\NN \oplus {\bf k}^\times$
  by $e_1\mapsto de_1$. We express a map $N^{C/S}=\NN ^m\oplus {\bf k}^\times$
  to $N$ by $e_i \mapsto \Gamma _i e_i + \rho _i$. Note that $\rho _i$ must satisfy constraint
  $\rho _i ^{l_i} =1$, since $f$ is a log morphism. This shows the finiteness.
  Some of them could be isomorphic under maps $N\ra N$ sending $e_1\mapsto e_1 + \rho$,
  where $\rho\in {\bf k}^\times$ must satisfy the condition $\rho ^d =1$. Hence, in this example, there are
   $|(\Pi _i \ZZ / l_i \ZZ )/(\ZZ /d\ZZ)|$ many realizations of log prestable maps up to isomorphisms,
   where the action of $\ZZ /d\ZZ$ on $\ZZ / l_i \ZZ$ is given
  by $a\cdot (a_1,...,a_m) = (\Gamma _1a + a_1,...,\Gamma _m a + a_m)$,  $a\in \ZZ /d\ZZ$
  and $a_i\in \ZZ /l_i\ZZ$.

\medskip

3. Using the deformation and obstruction theory at the long exact
sequence in section \ref{ob}, one may  try directly to prove the
first part of Theorem \ref{Thm1}, using Artin's theorem in
\cite{Artin}. We do not pursue this approach here.

\section{Perfect Obstruction Theory}\label{ob}
\subsection{}\label{ob1}
In this subsection, we show that there is a natural perfect
obstruction theory on the log algebraic stack
$\overline{M}_{g,n}^{\mathrm{log}}(\mathcal{U}/\cB )$. The
 method parallel to \cite{BF, B} will work if the cotangent
complexes are replaced by the logarithmic cotangent complexes, as
following. We first consider the diagram
\[ \begin{CD} \mathcal{C} @>>f> \mathcal{U}^{\mathrm{etw}} \\
                    @VV{\pi}V @. \\
                    \cM  @.
   \end{CD}\]
where $\mathcal{C}$ (resp. $\mathcal{U}^{\mathrm{etw}}$) is the
universal family of the moduli log stack $\cM :=
\overline{M}_{g,n}^{\mathrm{log}}(\mathcal{U}/\cB )$ (resp. $\cB
^{\mathrm{etw}}$) and $f$ is the evaluation map. We regard $f$ and
$\pi$ as log morphisms between log stacks. Let $\mathfrak{C}$ be
the universal family of $ \fM _{g,n}^{\mathrm{log}}$ and let
$\fMB$ be $\fM _{g,n}^{\mathrm{log}}\times _{LOG}\cB
^{\mathrm{etw}}$.
 There is the composite of natural maps
\[ f^*L ^{\bullet}_{\mathcal{U} ^{\mathrm{etw}}/\mathcal{B} ^{\mathrm{etw}}}
\ra L^{\bullet} _{\mathcal{C}/\cB ^{\mathrm{etw}}}\ra
L^{\bullet}_{\mathcal{C}/\mathfrak{C} \times _{LOG}\cB
^{\mathrm{etw}} } \cong \pi ^* L ^\bullet _{ \cM /\fMB }
\]
between logarithmic cotangent complexes. See
\cite{Olsson_Cotangent} for the definition of logarithmic
complexes and functorial properties. Taking the tensor product of
the composite and the relative dualizing sheaf $\omega _{\pi}$ of
$\pi$, we obtain an element in
\[\mathrm{Hom}_{D^b(\mathcal{C})}(f^*L^{\bullet}_{\mathcal{U} ^{\mathrm{etw}}/\mathcal{B} ^{\mathrm{etw}}} \ot \omega _{\pi} [1],
\pi ^* L ^\bullet _{\cM /\fMB }  \ot \omega _{\pi} [1]).\] This in
turn yields an element in
\[\mathrm{Hom}_{D^b(\mathcal{M})} (R\pi _*f^*L^{\bullet}_{ \mathcal{U} ^{\mathrm{etw}}/\mathcal{B} ^{\mathrm{etw}} } \ot \omega _{\pi}[1],
L ^\bullet _{ \cM /\fMB} )\] since
\[ \pi ^* L ^\bullet _{ \cM /\fMB }  \ot \omega _{\pi}[1]\cong \pi ^! L
^\bullet _{ \cM/\fMB } \] and $R\pi _!$ is left adjoint to $\pi
^!$.  Finally by Grothendieck-Verdier duality we have a natural
homomorphism
\begin{equation}\label{perfect} E^\bullet \ra L^{\bullet\ge -1}_{\cM /\fMB}
\end{equation}
where $E^\bullet := (R\pi _* f^* T_{\mathcal{U}
^{\mathrm{etw}}/\mathcal{B} ^{\mathrm{etw}}}^\dagger)^\vee$ and
$L^{\bullet \ge -1}_{\cM /\fMB}$ is two-term $[-1,0]$ truncation of the
logarithmic relative cotangent complex. Note that the latter
complex is isomorphic to the usual relative cotangent complex
since the map $\cM \ra \fMB$ is strict.
 The homomorphism (\ref{perfect}) of the
complexes is a perfect obstruction theory since the relative
deformation/obstruction theory for log morphisms is as expected as
in Theorem 5.9 of \cite{Olsson_Cotangent}, and $E^\bullet$ can be
realized as a two-term complex of locally free coherent sheaves.
This defines a virtual
fundamental class of
$\overline{M}_{g,n}^{\mathrm{log}}(\mathcal{U}/\cB )$ by \cite{LT, BF, B}.

\medskip

The absolute obstruction theory
$F^\bullet\ra L^{\bullet}_{\cM}$ can be obtained as in \cite{GP,
KKP} so that there is a distinguished triangle:
\begin{equation}\label{AbsoluteTriangle} \tau ^*L^{\bullet}_{\fMB} \ra F^\bullet \ra E^\bullet ,\end{equation}
where $\tau$ is the natural map $\cM \ra \fMB$.
Consider a deformation situation of the extensions over $\Spec
A[I]$ of a given $(f,C^\dagger ,W^\dagger, S^\dagger )$ over
$S:=\Spec A$, where $A$ is a reduced noetherian $\Lambda$-algebra;
$I$ is a finite $A$-module; and $A[I]$ is the trivial ring
extension of $A$ by $I$. Let $\mathrm{Aut}(C^\dagger \times
_{S^{\dagger}}W ^\dagger )$ be the set of automorphisms of the
trivial extension of  $C^\dagger \times _{S^{\dagger}}W ^\dagger$
over $\Spec A[I] ^\dagger$, whose restriction to $S^\dagger$ is
the identity. Here the log structure on $\Spec A[I]^\dagger$ is
given by the pullback of $\Spec A[I]\ra \Spec A$. Also, let
$\mathrm{Def}_I(C ^\dagger \times _{S^{\dagger}} W ^\dagger )$ be
the set of isomorphism classes of the extensions over $S^\dagger$.
Then combining Proposition 3.14 in \cite{K.Kato} and Theorem 5.9
in \cite{Olsson_Cotangent}, we obtain an $A$-module exact sequence
\begin{eqnarray*} &0& \ra \mathrm{Aut}_{I}(C^\dagger \times _{S^{\dagger}}W ^\dagger ) \ra
\mathrm{RelDef}(f)=R^0\pi _*f^*T_{W^\dagger/S^\dagger} \ot _{\mathcal{O}_S}I \ra \mathrm{Def}(f) \\
&\ra& \mathrm{Def}_I(C ^\dagger \times _{S^{\dagger}} W ^\dagger )
\stackrel{\varphi}{\ra} \mathrm{RelOb}(f) = R^1\pi _*f^*T_{W^\dagger/S^\dagger} \ot _{\mathcal{O}_S} I \ra
\mathrm{Ob}(f) \ra 0
\end{eqnarray*} where
$\mathrm{Ob}(f)$ is defined to be the cokernel of $\varphi$. When
$I\cong A$, we may also use (\ref{AbsoluteTriangle}) and
Proposition III, 12.2 in \cite{Hart} to derive the above long
exact sequence.

\subsection{Log admissible covers}\label{ob2}
 Let $A$ be an Artinian local ring over $\mathbf{k}$, with the
maximal ideal $\mathfrak{m}_A$. Consider a small extension $R$ of
$A$ by $I$ and an admissible map $f$, locally at a
distinguished node described by
\[\begin{array}{ccc}(A[x,y, z_1,...,z_{r-1}]/(xy-\tau )^{\mathrm{sh}}& \stackrel{f^*}{\ra} &
 (A[u,v]/(uv-t))^{\mathrm{sh}} \\ x, \ y &\mapsto & u^l,\ v^
l ,
 \end{array}\]
  where superscript
$\mathrm{sh}$ means the strict henselianization at the \lq
origin'.
 We want to compute a local obstruction of
extending $f$ to an admissible map over a given extended domain
and a given extended target near the node. The obstruction is $\tilde{\tau} - \tilde{t}^l \in
I$ as explained in \cite{GV} if the extensions are given
by $xy=\tilde{\tau}\in\mathfrak{m}_R$
and $uv=\tilde{t}\in \mathfrak{m}_R$ (locally at singular points), respectively.
Here $\tilde{\tau}$ and $\tilde{t}$ are extensions of $\tau$ and $t$, respectively.
 This vanishes if $f$
is a log morphism and the extended source and target are
logarithmic ones. Indeed, the equation $[\log \tilde{\tau} ] =
[l\log \tilde{t}]$ in $\overline{N}_R$ implies that
$\tilde{\tau}=(1+c)\tilde{t}^l$ where $c\in I$. Since
$\tilde{t}\in \mathfrak{m}_R$, we see that $\tilde{\tau} = \tilde{t}^l$.

\medskip
Provided that $f$ is a log morphism,
for a log extension on the target with $xy=\tilde{\tau}$,
there is a unique log extension with $uv=\tilde{t}$
such that $\tilde{\tau}=\tilde{t}^l$.
This is  due to the existence of element $\tilde{s}\in\mathfrak{m}_R$ such
that $\tilde{s}^d=\tilde{\tau}$ and $s^\Gamma = t$,
where $\tilde{s}$ is an extension of $s$.
Hence, an infinitesimal deformation of $(W,M_W)/(S,N)$ uniquely
determines  an infinitesimal deformation of $(C,M_C)/(S,N)$ at distinguished nodes.
In particular, this shows that when the target is a projective
smooth curve $X$, the natural map
\[\begin{array}{ccc} \LMmu &\stackrel{\Psi}{ \ra} & X[n] ^{\mathrm{tw}} \\
     (f: (C, M_C, {\bf p}) \ra (W, M_W))/(S,N) )&\mapsto & (W/S, N\leftarrow N^{W/S}, f({\bf p})) \end{array} \]
                       is \'etale, where $\LMmu$ is the
                       moduli stack of $(g,n,\beta)$ log stable
                       $\mu$-ramified maps
                       (\ref{LogStableRamifiedMaps}), and
                       $X[n]^{\mathrm{tw}}$ is the stack of
                       $n$-pointed log twisted stable FM spaces of $X$.
                       Here \lq stable' means
                       that there are at least two special points on screens of FM spaces. By the following Lemma
                       $X[n]^{\mathrm{tw}}$ is an open stack of $\mathcal{L}_{X[n]}$.
(Note that $X[n]^{\mathrm{tw}}$ is not separated over ${\bf k}$ if $n\ge 2$.)
Essentially, it is already proven in \cite{We} that the map $\Psi$ is \'etale.

\begin{Lemma} {\em (\cite{KKO})} Let $X$ be a smooth variety over
$\bf k$. The stack of FM spaces with $n$ distinct, smooth,
sections is equivalent to the scheme $X[n]$ with the universal
space $X[n]^+$.
\end{Lemma}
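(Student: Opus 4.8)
The plan is to show that the tautological object over $X[n]$ induces an equivalence, by verifying that the relevant fibered category is a sheaf of sets whose isomorphism classes are represented by $X[n]$. Write $\mathfrak F_n$ for the stack of FM spaces of $X$ equipped with $n$ distinct, smooth sections. First I would observe that the universal family $X[n]^+\ra X[n]$, together with its $n$ tautological sections and the map $X[n]^+\ra X$ given by the composite $X[n]^+\ra X^{n+1}\stackrel{\mathrm{pr}_{n+1}}{\ra}X$, is itself an object of $\mathfrak F_n$ over $X[n]$. This produces a morphism $\Phi:X[n]\ra\mathfrak F_n$ (viewing the scheme as a stack), and the assertion is precisely that $\Phi$ is an equivalence carrying the tautological object to the universal family.

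For essential surjectivity I would invoke the construction and universal property of the Fulton--MacPherson space from \cite{FM, KKO}. Given $(W/S,\pi_X,\sigma_1,\dots,\sigma_n)\in\mathfrak F_n(S)$, the \'etale-local definition of an FM space presents $W$ \'etale-locally as a pullback of some $X[n']^+$; the essential point is that $X[n]$ is built to represent the functor of $n$-pointed FM degenerations with \emph{distinct} smooth markings, so the bubbling recipe of \cite{FM} reconstructs $W$ from a canonical classifying morphism $g_W:S\ra X[n]$ together with an isomorphism $W\cong g_W^*X[n]^+$ compatible with $\pi_X$ and the sections. The distinctness of the sections is exactly what renders $g_W$ well-defined and canonical.

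The essential content, and what I expect to be the main obstacle, is rigidity: I would prove that every object of $\mathfrak F_n$ has trivial automorphism group, so that $\mathfrak F_n$ is a sheaf of sets and the representation of its isomorphism classes by $X[n]$ upgrades to an equivalence $\Phi$. Concretely one must show that an automorphism $\varphi$ of $W/S$ with $\pi_X\circ\varphi=\pi_X$ and $\varphi\circ\sigma_i=\sigma_i$ for all $i$ is the identity. Since $\pi_X$ restricts to the identity on the main component $X\subset W$, already $\varphi=\id$ there; the remaining freedom lives on the screens $\PP(T_xX\oplus\mathbf 1)$ attached along the degeneration tree. Because the sections are labelled and fixed by $\varphi$, the automorphism preserves the combinatorial type and each attaching node, so it cannot permute screens; I would then induct on the depth of the tree, from the root outward. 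At each screen the attaching hyperplane $\PP(T_xX)$ is already fixed pointwise (the parent having been shown fixed, the blow-up being canonical), whence $\varphi$ acts there through the affine group $\mathbb G_a^{\dim X}\rtimes\mathbb G_m$ of translations and dilations, and two distinct special points on the screen --- markings or attaching nodes of descendant screens, of which stability guarantees at least two --- pin down both the translation and the dilation, forcing $\varphi=\id$. The delicate part is organizing this induction to cover screens carrying only few markings, where one must exploit the stability built into FM degenerations. Combining triviality of automorphisms with the representability of isomorphism classes by $X[n]$ yields that $\Phi$ is an equivalence of stacks, with universal object $X[n]^+$.
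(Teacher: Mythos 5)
The paper itself gives no proof of this lemma: it is quoted from \cite{KKO}, and the surrounding text simply uses it. So there is no internal argument to compare against, and your proposal has to be judged on its own terms.

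The rigidity half of your proposal is essentially the right idea, with two caveats. First, the root component of a degenerate fiber is a blow-up of $X$, not $X$ itself, and $\pi_X$ restricted to it is the blow-down, not the identity; the fix is easy (an automorphism commuting with $\pi_X$ agrees with the identity on a dense open of the root, hence equals it by separatedness and reducedness), but the statement as written is not correct. Second, and more importantly, stability is \emph{not} ``built into FM degenerations'' once you forget the markings of the local models: an FM space is only required to be \'etale-locally a pullback of some $X[n']^+$ as a \emph{space}, so a fiber consisting of a blow-up of $X$ with one screen carrying a single section and no other special point is a legitimate FM space with one smooth section, and it has automorphism group $\mathbb{G}_m$. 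Without imposing stability (at least two special points --- section images or attaching nodes of children --- on every screen, as the paper states just before invoking the lemma) the statement is false; your induction silently uses exactly this hypothesis, so it must be made explicit. A further small point: you prove triviality of automorphisms fiberwise over geometric points, while the conclusion ``sheaf of sets'' requires triviality over arbitrary, possibly non-reduced, bases; in characteristic zero this follows from representability of the automorphism functor by a group scheme together with Cartier's theorem, but the step should be acknowledged.

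The genuine gap is in essential surjectivity. Your justification is that ``$X[n]$ is built to represent the functor of $n$-pointed FM degenerations with distinct smooth markings'' --- but that representability \emph{is} the lemma. Fulton--MacPherson construct $X[n]$ by explicit blow-ups and identify its geometric points with isomorphism classes of stable $n$-pointed degenerations; they do not prove that the family functor over arbitrary (e.g.\ non-reduced) bases is represented by $X[n]$, which is precisely what is claimed here. What is missing is the actual work: given $(W/S,\pi_X,\sigma_1,\dots,\sigma_n)$, use the defining \'etale-local presentation $W_{|_T}\cong T\times_{X[n']}X[n']^+$, construct from the $n$ disjoint smooth sections a canonical classifying map $T\ra X[n]$ together with a compatible isomorphism onto the pullback of $X[n]^+$ with its tautological sections, and then use rigidity --- which makes the fibered category equivalent to a sheaf --- to glue these \'etale-local classifying maps into a global one and to prove its uniqueness. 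In other words, rigidity is needed \emph{inside} the surjectivity argument (to descend local classifying data), not merely afterwards to upgrade a bijection on isomorphism classes to an equivalence; as written, the proposal assumes the conclusion at the one step where the geometry of the Fulton--MacPherson construction actually has to be used.
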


According to \ref{cB}, $X[n]^{\mathrm{tw}}$ is smooth over $\bf k$.
Therefore, the finite map from the smooth stack $\LMmu$ to $\overline{M}_{g,n,\mu }(\fX ^+/\fX ,\beta )$ is the normalization map.
The normalization is also constructed by the stack of balanced twisted covers in \cite{ACV}.

\section{Chain Type}

\subsection{}\label{chain1} In this section, we provide a modular desingularization of the main component of
the moduli space of elliptic stable maps to a projective space.
The main component is, by definition, the irreducible component of
the moduli space, containing all the elliptic stable maps with
smooth domains.  First, note that any prestable curve $C$ over
$\mathbf{k}$ of genus 1 has a unique subcurve $C_0$ which is
either arithmetic $g=1$ irreducible component or a loop of
rational curves. We call $C_0$ the essential part of the curve
$C$. Its dual graph looks like:
\[\xymatrix{\bullet &    \bullet \ar@{-}@/^1pc/[r] & \bullet \ar@{-}@/^1pc/[l] &       \bullet\ar@{-}[d]\ar@{-}[rd] &        &   &\\
     &           &                   &        \bullet\ar@{-}[r]               &\bullet  &  & ...
                            }\]
Note that the dualizing sheaf $\omega _{C_0}$ is trivial.
Let $\overline{M}_{1}^{\mathrm{log,\, ch}}(\fX ^+/\fX ,\beta)$ be
the moduli stack of $(g=1,n=0,\beta\ne 0)$ log stable maps $(f,C,W)$
satisfying the following conditions additional to those in
\ref{LogStableMapstoX}. For every $s\in S$:
\begin{itemize}
\item
Every end component of $W_{\bar{s}}$  contains  the entire
image of  the essential part of $C_{\bar{s}}$ under $f_{\bar{s}}$.

\item The image of the essential part of $C_{\bar{s}}$  is nonconstant.

\end{itemize}

Here, it is possible that some of irreducible components in the essential part are mapped to points.
Note that the dual graph of the target $W_s$ must be a chain.
Such a log stable map is called an elliptic log stable map to a chain type FM space $W$ of the
smooth projective variety $X$. Set
$\overline{M}_{1}^{\mathrm{log,\, ch}}(\fX ^+/\fX ):=\coprod _{\beta} \overline{M}_{1}^{\mathrm{log,\, ch}}(\fX ^+/\fX ,\beta)$.

\subsection{Proof of Main Theorem B}
By Theorem \ref{Thm1}, it is enough to prove the properties for
the moduli stack of the non-log version of elliptic log stable
maps to chain type FM spaces of $X$. It is a DM stack of finte
type over $\bf k$ by Remark 1 in \ref{DMstack}.

\medskip

{\em Properness.}  We use the
valuative criterion of properness. We show that the criterion
holds,  by adapting the properness argument in \cite{KKO}. Let $f_\xi :
C_\xi \ra W_\xi $ be an elliptic stable map to a chain type $W$
over a quotient field of the DVR $R={\bf k}[[t]]$, and let $\xi$ and $p$
be the generic point and the closed point of $R$, respectively. For
simplicity assume that $W_\xi = X\times \xi$.

- Uniqueness: Suppose that there are two extensions $f_i : C^{(i)}
\ra W^{(i)}$ of $f$ over $R$. To show that $f_1$ and $f_2$ are
equivalent over $f$, it is enough to verify that $W^{(1)}$ and
$W^{(2)}$ are isomorphic over $X$ and $R$. To prove it, we
construct sections of $W^{(i)}\ra \Spec R$. Consider two sections
passing through the end component of $W^{(i)}_p$. Also, consider,
for each ruled component, a section passing through the ruled
component. Let $N_i$ be two plus the number of ruled component of
$W^{(i)}_p$ unless $W^{(i)}_p=X$. Let $N_i=0$ if $W^{(i)}_p=X$. We
may assume that those $N_i$ sections are pairwise distinct at $p$.
Let $W$ be the FM type space of $X$ over $R$, separating each set
of $N_i$ sections: $W$ is defined to be $g^*X[N_1, N_2]^+$, where
$g: \Spec R\ra X[N_1, N_2]$ is the map associated to two sets of
sections. Here $X[N_1, N_2]$ denotes the compactification of
configurations of two sets of $N_i$ distinct labeled points, $i=1,
2$. The precise construction of $X[N_1, N_2]$ can found in
\cite{KKO}. The space $X[N_1, N_2]$ has the universal space
$X[N_1, N_2]^+$ with two sets of $N_i$ distinct sections. We claim
that $W$ is isomorphic to $W_i$ over $X$ and $R$. To see it, we
first remove two sections associated to the end component of
$W^{(2)}$. Then still, $W$ with the remaind sections is stable
since the domain of the stable map limit to the new target $W$ has
genus 1 and the stable map limit agrees with $f_i$ after the
contraction of suitable screens of the target $W$ and then stable
contraction of the domain curve. Now remove the rest of sections
associated ruled components of $W^{(2)}$. $W$ with $N_1$ sections
remains stable otherwise there will be a component of curve
$C^{(1)}_p$ which is mapped into a singular locus of $W^{(1)}$.
Thus, $W\cong W^{(1)}$ over $X$ and $R$. The same method shows
that $W\cong W^{(2)}$ over $X$ and $R$,
 and hence $W^{(1)} \cong W^{(2)}$ over $X$ and $R$.

- Existence: Consider each irreducible component $C_\gamma$ of the essential
subcurve of $C_\xi$ where $f_\xi (C_\gamma)$ is nonconstant.
The $f_\xi$ restricted to $C_\gamma$ is $\mu$-ramified, for some $\mu = (\mu _1,...,\mu _n)$.
Consider the screens created by taking the limit
of $f_\xi$ restricted to $C_\gamma$ as a $\mu$-ramified stable map. See \cite{KKO} for the construction.
There is a natural partial order on the set of all the screens for all $\gamma$.
Take the first one (which needs the smallest magnifying power to be visible).
It exits since $g=1$. Now consider two general sections passing through the first screen, and
then the stable map limit with the new target separating the sections. If necessary, take a further
expansion along the singular locus of the target at the special
fiber so that there are no domain components which are mapped into
singular locus of the new target. This process ends in a finite step
as in \cite{KKO} and yields a stable admissible map to a
chain type FM space of $X$ over $R$, which satisfies the desired requirements.

\medskip

{\em Smoothness.} This is simply because the relative obstruction
space in section \ref{ob} vanishes as following. Let $f:C^\dagger
\ra W^\dagger $ over ${\bf k}^\dagger$ be an elliptic log stable map to a chain type FM space $W$
of $X:=\PP ^r_{\bf k}$.  In what follows,
$T^\dagger$ denotes the log tangent sheaf
$T_{W^\dagger/{\bf k}^\dagger}$.

Decompose $C$ to be the union $\bigcup C_i$ of the essential part $C_0$ and
the irreducible rational components $C_i$, $i\ge 1$;  let $f_i$ be the map $f$ restricted
to $C_i$.
To prove $H^1(C, f^*T^\dagger)=0$, we claim that $H^1(C_i,
f^*T^\dagger)=0$ for all $i$ and the evaluation map \[ \bigoplus _i
H^0(C_i,f_i^*T^\dagger)\ra \bigoplus _{\{i,j: i\ne
j\}}f^*T^\dagger|_{C_i\cap C_j}
\] is surjective. For the latter, by  the induction argument
on the number of irreducible rational components $C_i$, $i\ge 1$,
it suffices that $H^1(C_i, f_i ^*T^\dagger\ot \mathcal{O}_{C_i}
(-p))=0$ if $p$ is a point in $C_i$ and $i\ge 1$. When $W=X$ case,
it  follows from the Euler sequence, since $H^1(C_0,
f_0^*\mathcal{O}_X(1) )=H^0(C_0, f_0^*\mathcal{O}_X (-1)\ot \omega
_{C_0})^\vee =0$ and $H^1(C_i, f_i^*\mathcal{O}_X(1) \ot
\mathcal{O}_{C_i}(-p))=0$ for all $i\ge 1$. Similarly, when $W$ is
singular, the claim follows  from the generalized  Euler
sequences:
\[\begin{array}{ccccccccc} 0   &\ra & \mathcal{O}_Y & \ra  & (\bigoplus\mathcal{O}_Y(1))\oplus \mathcal{O}_Y & \ra &
T^\dagger _{ |_{Y}} &\ra & 0 \\
0 &\ra& \mathcal{O}_Y(-E) &\ra& (\bigoplus\pi^*\mathcal{O}_{\PP ^r}(1))(-E) &\ra&
T^\dagger _{ |_Y} &\ra& 0 \\
0 & \ra& \mathcal{O}_Y(-E) &\ra& (\bigoplus\pi^*\mathcal{O}_{\PP
^r}(1)(-E))\oplus \mathcal{O}_Y(-E) &\ra& T^\dagger _{ |_Y} &\ra&
0
 \end{array}\]  for the logarithmic tangent sheaf restricted to the end, the root,
and a ruled component $Y$ of $W$, respectively. The explanation of the sequences is in order.
The first sequence is standard. The second one can be obtained from an isomorphism
\[ \pi ^*T_X (-E) \ra T^\dagger_{| _Y} .\] This isomorphism can be seen by a
local description of blowup $\pi : Y\ra X$ at a point, with the
exceptional divisor $E$ (for example, see the
proof of Lemma 1 in \cite{KLO}). The third one results from the gluing of the
previous two sequences.
\hfill$\Box$



\begin{thebibliography}{99}

\bibitem{ACV}  D. Abramovich,  A. Corti, and A. Vistoli,  {\em Twisted bundles and admissible covers,}
 Special issue in honor of Steven L. Kleiman. Comm. Algebra 31 (2003), no. 8, 3547--3618.

 \bibitem{AF} D. Abramovich and B. Fantechi, {\em Orbifold
 techiques in degeneration formula,} In preparation.

\bibitem{AOV} D. Abramovich, M. Olsson, and A. Vistoli,
{\em Twisted stable maps to tame Artin stacks,} arXiv:0801.3040.

\bibitem{Artin} M. Artin, {\em Versal deformations and algebraic stacks}, Inventiones Math. {\bf 27} (1974), 165-189.

\bibitem{B} K. Behrend, {\em Gromov-Witten invariants in algebraic geometry,}
Invent. Math. 127 (1997), no. 3, 601--617.

\bibitem{BF} K. Behrend and B. Fantechi, {\em Intrinsic normal
cone,} Invent. Math. 128 (1997), no. 1, 45--88.


\bibitem{DM} P. Deligne and D. Mumford,
{\em The irreducibility of the space of curves of given genus,}
Inst. Hautes Etudes Sci. Publ. Math. No. 36 1969 75--109.

  \bibitem{FM}
        W. Fulton and R. MacPherson,  {\em A compactification of configuration spaces},
        Annals of Math. \textbf{139} (1994), 183--225.


\bibitem{GP} T. Graber and R. Pandharipande,  {\em Localization of virtual classes,}
 Invent. Math. 135 (1999), no. 2, 487--518.

\bibitem{GV} T. Graber and R. Vakil {\em
Relative virtual localization and vanishing of tautological
classes on moduli spaces of curves,}  Duke Math. J. 130 (2005),
no. 1, 1--37.

\bibitem{HM} J. Harris and D. Mumford, {\em On the Kodaira dimension of the
moduli space of curves,} Invent. math. 67, 23--86 (1982).

\bibitem{Hart} R. Hartshorne, {\em Algebraic geometry,}
Graduate Texts in Mathematics, No. 52. Springer-Verlag, New York-Heidelberg, 1977

\bibitem{F.Kato_LogSmoothCurve} F. Kato, {\em Log smooth deformation and moduli of log smooth curves,}
 Internat. J. Math. 11 (2000), no. 2, 215--232.

\bibitem{K.Kato} K. Kato, {\em Logarithmic structures of Fontaine-Illusie,}
 Algebraic analysis, geometry, and number theory (Baltimore, MD, 1988), 191--224,
 Johns Hopkins Univ. Press, Baltimore, MD, 1989.

\bibitem{KKO} B. Kim, A. Kresch, and Y.-G. Oh, {\em A compactification
of the space of maps from curves,} Preprint 2007.

\bibitem{KKP} B. Kim, A. Kresch, and T. Pantev,
{\em Functoriality in intersection theory and a conjecture of Cox, Katz, and Lee,}
 J. Pure Appl. Algebra 179 (2003), no. 1-2, 127--136.

\bibitem{KLO} B. Kim, Y. Lee, and K. Oh,
{\em Rational curves on blowing-ups of projective spaces,} Michigan Math. J. 55 (2007),
      335--345.


\bibitem{KS} B. Kim and F. Sato, {\em A generalization of
Fulton-MacPherson configuration spaces},  arXiv:0806.3819.

\bibitem{LM-B} G. Laumon and L. Moret-Bailly, {\em Champs alg\'ebriques,}
A series of modern surveys in mathematics, Vol 39,
Springer - Verlag 2000.

\bibitem{Li1} J. Li,  {\em Stable morphisms to singular schemes
and relative stable morphisms,} J. Differential Geometry, 57
(2000) 509-578.

\bibitem{Li2} J. Li,  {\em A degeneration formula of GW-invariants,}
 J. Differential Geom. 60 (2002), no. 2, 199--293.

\bibitem{LT} J. Li and G. Tian,
{\em Virtual moduli cycles and Gromov-Witten invariants of
algebraic varieties,} J. Amer. Math. Soc. 11 (1998), no. 1,
119--174.

\bibitem{LZ} J. Li and A. Zinger,  {\em On the genus-one Gromov-Witten invariants of complete
Intersections,} math.AG/0507104.

\bibitem{Mo} S. Mochizuki, {\it The geometry of the
compactification of the Hurwitz scheme} Publ. RIMS, Kyoto Univ. 31
(1995), 355-441.


\bibitem{Ogus} A. Ogus, {\em Lectures on Logarithmic
Algebraic Geometry,} TeXed note in 2006.


\bibitem{Olsson_UnivLog} M. Olsson, {\em Universal log structures on semi-stable varieties,}
 Tohoku Math. J. (2) 55 (2003), no. 3, 397--438.

\bibitem{Olsson_LogGeom} M. Olsson, {\em Logarithmic geometry and algebraic stacks,}
 Ann. Sci. Ecole Norm. Sup. (4) 36 (2003), no. 5, 747--791.

\bibitem{Olsson_Cotangent} M. Olsson, {\em The logarithmic cotangent complex,} Math. Ann. 333, (2005) 859-931.

\bibitem{Olsson_DefStack} M. Olsson, {\em Deformation theory of representable morphisms of
 algebraic stacks,} Math. Zeit 253 (2006), 25-62.


\bibitem{Olsson_LogTwisted} M. Olsson, {\em (Log) twisted curves,}
 Compos. Math. 143 (2007), no. 2, 476--494.

\bibitem{Se} E. Sernesi, {\em Deformations of algebraic schemes,}
A series of comprehensive studies in mathematics, Vol. 334,
  Springer-Verlag, 2006.

 \bibitem{Si} B. Siebert, {\em Log Gromov-Witten Invariants,} In preparation.

\bibitem{VZ} R. Vakil and A. Zinger, {\em A Desingularization
of the main component of the moduli space of genus-one stable maps
into $\mathbb{P}^n$,}  Geom. Topol. 12 (2008), no. 1, 1--95.


\bibitem{We} S. Wewers, {\em Construction of Hurwitz spaces,}
Thesis, Universit\"at-Gesamthochschule Essen, 1998.

\bibitem{Z1} A. Zinger, {\em Reduced genus-one Gromov-Witten invariants } arXive:math/0507103.

\bibitem{Z2} A. Zinger, {\em The reduced genus-one Gromov-Witten
  invariants of Calabi-Yau hypersurfaces } arXive:0705.2397.

\bibitem{Z3} A. Zinger, {\em Standard vs. reduced genus-one
Gromov-Witten invariants,} Geom. Topol. 12 (2008), no. 2,
1203--1241.

\end{thebibliography}
\end{document}